\def\P{{\cal P}}
\def\b.#1{{\bf #1}}  
\def\cuadrito{ ${\vcenter{\vbox{\hrule height.4pt
                                \hbox{\vrule width.4pt height3pt \kern3pt
                                      \vrule width.4pt}
                                 \hrule height.4pt}}}$ }
\def\cua{ ${\vcenter{\vbox{\hrule height .4pt
                                \hbox{\vrule width .4 pt height7pt \kern7pt
                                      \vrule width .4pt}
                                 \hrule height .4pt}}}$ }
 \newtheorem{theorem}{Theorem}
 \newtheorem{claim}{Claim}
 \newtheorem{corollary}[theorem]{Corollary}
 \newtheorem{lemma}{Lemma}
 \newtheorem{proposition}[theorem]{Proposition}
 \newenvironment{proof}[1][Proof]{\textbf{#1.} }{\ \rule{0.5em}{0.5em}\vskip 12pt}
\newif\ifpdf
\title{Decomposition of  balanced multipartite tournaments into  strongly connected tournaments\footnote{Research supported  by  PAPIIT-M\'exico under project IN104915.}}
\author{Ana Paulina Figueroa\footnote{Departamento de Matem\'aticas ITAM, M\'exico. email: apaulinafg@gmail.com}\cr Juan Jos\'e Montellano-Ballesteros\footnote{Instituto de Matem\'aticas, UNAM, M\'exico, email: juancho@im.unam.mx}\cr Mika Olsen\footnote{Departamento de Matem\'aticas Aplicadas y Sistemas, UAM-C, M\'exico. email:olsen@correo.cua.uam.mx}}
\date{}{}{}
\begin{document}

\maketitle





\begin{abstract} Decomposing a digraph into  subdigraphs with a fixed structure or property is a classical problem in graph theory and a useful tool in a number of applications of networks and communication. A digraph is strongly connected if  it contains a directed path from each vertex to all others.  In this paper we consider multipartite tournaments,  and we study the existence of a partition of a multipartite tournament  with $c$ partite sets into strongly connected $c$-tournaments. This is a continuation of the study started in 1999 by Volkmann of the existence of strongly connected subtournaments in multipartite tournaments.
\end{abstract}

\noindent{\bf Keyword: } Oriented graphs, Multipartite tournaments, Decomposition,  Strong connected digraphs


\section{Introduction and definitions}
\label{sec1} Decomposing a digraph into  subdigraphs with a fixed structure or property is a classical problem in graph theory and a useful tool in a number of applications of networks and communication. For instance, finding a decomposition in strongly connected components has been used in  compiler analysis, data mining, scientific computing, social networks and other areas. 
 In this paper we consider multipartite tournaments,  and we study the existence of a partition of the set of vertices of  a multipartite tournament  with $c$ partite sets, into strongly connected tournaments of order $c$. Observe that  every partite set of the multipartite tournament has  exactly one  vertex in each strongly connected tournament of the partition.   We can illustrate our result with the following situation:  if  all the vertices of any partite set has the same information, and any pair of vertices of different partite sets has different information, then the total information spread  among all the vertices of the digraph can be distributed effectively using the partition into strongly connected tournaments  since each strongly connected tournament possess one vertex of each partite set.

Let $c$- be a non-negative integer, a  {\it $c$-partite or multipartite tournament} is a digraph obtained from a complete $c$-partite graph by orienting each edge. 
In 1999 \cite{sub} Volkmann developed the first contributions in the study of the structure of the strongly connected subtournaments in multipartite tournaments. He proved that every almost regular $c$-partite tournament contains a strongly connected subtournament of order $p$ for each $p\in\{3,4, \ldots, c-1 \} $. In the same paper he also proved that if each partite set of an almost regular  $c$-partite tournament has at least $\frac{3c}{2}-6$ vertices, then there exist a strong subtournament of order $c$. In 2008 \cite{Almost-regular}, Volkmann and Winsen proved that every almost regular $c$-partite tournament has a strongly connected subtournament of order $c$ for $c\geq 5$. In 2011 \cite{X} Xu et al. proved that every  vertex of regular  $c$-partite tournament with  $c\ge16$, is contained in a strong subtournament of order  $p$ for every  $p\in \{3,4,\dots,c\}$. Finally, in 2016 \cite{amo},  we proved that for every (not necessarily strongly connected) balanced $c$-partite tournament of order $n\ge6$, if  the global irregularity of $T$ is at most $\frac{c}{\sqrt{3c+26}}$, then $T$ contains a strongly connected tournament of order $c$. 


Let $T$ be a $c$-partite tournament of order $n$ with partite sets $\{V_i\}_{i=1}^c$. We call $T$ \textit{balanced}, if all partite sets contain the same number of vertices and we denote by $G_{r,c}$ a balanced $c$-partite tournament satisfying that $|V_i|=r$ for every $1\le i\le c$. Throughout this paper $|V_i|=r$ for each $i\in [c]$. 
{As a {\it partition of $G_{r,c}$ in maximal tournaments} we will understand a spanning subdigraph of $G_{r,c}$  which is a set of $r$ pairwise vertex-disjoint tournaments of order $c$}.

Our main result  gives sufficient conditions in terms of the minimum degree, the number of partite and its order to guarantee that a $r$-balanced $c$-partite tournament has a partition 
{in maximal tournaments such that each of its $r$ tournaments is strongly connected. Such a partition will be called a \textit{strong partition}.}

We will follow almost all the definitions and notation of \cite{B-J-G}. The maximal independent sets of $T$ are called the partite sets of $T$. If $T$ is a $c$-partite tournament, $\Delta(T)=\max\{d^j(x):x\in V(T), j\in\{+,-\} \}$  and $\delta(T)=\min\{d^j(x):x\in V(T), j\in\{+,-\} \}$. Notice that $i_g(T)=\Delta(T)-\delta(T)$. Let $x\in V(T)$ and  $i\in [c]$, the out-neighborhood of $x$ in $V_i$ is $N^+_i(x) =V_i\cap N^+(x)$; the in-neighborhood of $x$ in $V_i$ is $N^-_i(x) =V_i\cap N^-(x)$;   $d^+_i(x) =|N^+_i(x) |$ and   $d^-_i(x) = |N^-_i(x)|$. If $x\in V(T)$,  $\Delta_V^+(x)= \max \{d_i^+(x): i\in [c] \}$; $\delta_V^+(x)= \min \{d_i^+(x): i\in [c] \}$);   $\Delta_V^-(x)= \max \{d_i^-(x): i\in [c] \}$ and $\delta_V^-(x)= \min \{d_i^-(x): i\in [c] \}$.
If $T$ is a $c$-partite tournament, the \textit{maximum out-degree of $T$ with respect to the parts} is $\Delta_V^+(T) = \max \{d_i^+(x) : i \in [c] \hbox{ \& } x\in V(T)\}$ and the \textit{minimum out-degree of $T$ with respect to the parts} is $\delta_V^+(T) = \min \{ d_i^+(x) : i \in [c] \hbox{ \& } x\in V(T)\}$. Analogously, we define   $\Delta_V^-(T) = \max \{d_i^-(x) : i \in [c] \hbox{ \& } x\in V(T)\}$ and $\delta_V^+(T) = \min \{ d_i^-(x) : i \in [c] \hbox{ \& } x\in V(T)\}$. We will simply write $\Delta^+_V$, for example, instead of $\Delta^+_V(T)$ whenever it is {clear} in which $c$-partite tournament $T$ we are working on. 

If $G_{c,r}$ is a balanced $c$-partite tournament, we define a new measure of irregularity called the irregularity restricted to the parts as $\mu (G_{r,c})= \max\{\Delta^+_V-\delta_V^+, \Delta^-_V-\delta^-_V \}$. Our main result has $\mu(G_{r,c})$ as a parameter.

\section{Main Results} 

In this section we used Lemmas $1-4$ in order to proof our Main Result. The  prove of these  lemmas can be found in Section  \ref{pruebas lemas}.

\begin{lemma}\label{particion}  The number of partitions of $G_{r,c}$  {in maximal tournaments} is $(r!)^{c-1}$.\end{lemma}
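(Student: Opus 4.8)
The plan is to exhibit an explicit bijection between the set of partitions of $G_{r,c}$ in maximal tournaments and the set $(S_r)^{c-1}$ of $(c-1)$-tuples of permutations of $[r]$, and then read off the count $|(S_r)^{c-1}| = (r!)^{c-1}$.

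First I would isolate the structural fact that makes the whole count transparent: in $G_{r,c}$ a tournament of order $c$ meets each partite set in exactly one vertex. Indeed, two vertices lying in the same partite set are non-adjacent in $G_{r,c}$, so no tournament can contain both; hence a $c$-vertex subdigraph that is a tournament uses at most one vertex per part, and with exactly $c$ parts it uses exactly one per part. Conversely any transversal (a set of $c$ vertices, one from each part) induces a tournament of order $c$, since every pair of vertices from distinct parts is joined by an arc. Thus a partition of $G_{r,c}$ in maximal tournaments is the same data as a partition of $V(G_{r,c})$ into $r$ transversals.

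Next, fix once and for all a labelling $V_i=\{v_i^1,\dots,v_i^r\}$ for each $i\in[c]$. Given a partition into $r$ transversals, each transversal contains exactly one vertex of $V_1$, so the $r$ tournaments can be \emph{canonically} indexed $T_1,\dots,T_r$ by declaring $v_1^k\in V(T_k)$. For each $i\in\{2,\dots,c\}$ this determines a map $\sigma_i\colon[r]\to[r]$ by $\sigma_i(k)=j$, where $v_i^j$ is the unique vertex of $V_i$ in $T_k$; since the $T_k$ are pairwise disjoint and cover $V_i$, each $\sigma_i$ is a permutation. I claim the assignment sending a partition to $(\sigma_2,\dots,\sigma_c)$ is a bijection onto $(S_r)^{c-1}$: the inverse sends $(\sigma_2,\dots,\sigma_c)$ to the family whose $k$-th member is $\{v_1^k\}\cup\{v_i^{\sigma_i(k)}:2\le i\le c\}$, which is a transversal, hence a tournament of order $c$ by the first step, and these $r$ sets are disjoint and cover $V(G_{r,c})$.

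The only point requiring care is the passage from an (unordered) partition to the ordered list $T_1,\dots,T_r$; indexing each tournament by the vertex of $V_1$ it contains makes this canonical, so no spurious factor of $r!$ is introduced. After that the conclusion is immediate: the $c-1$ permutations $\sigma_2,\dots,\sigma_c$ may be chosen freely and independently, each in one of $r!$ ways, so the number of partitions of $G_{r,c}$ in maximal tournaments is $(r!)^{c-1}$. I do not expect a genuine obstacle here — this is essentially a counting lemma, and the work is entirely in phrasing the bijection so that well-definedness in both directions is evident.
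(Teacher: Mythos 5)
Your proof is correct and follows essentially the same approach as the paper: index the $r$ tournaments by the vertices of $V_1$ and then identify each partition with a free choice of a permutation of each of the remaining $c-1$ partite sets. Your version is somewhat more careful about why each maximal tournament is a transversal and why the correspondence is a bijection, but the underlying argument is identical.
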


Let  $x\in V_c$ and  let $\mathcal{H}_k(x)$ be the set of vectors  $(h_1,h_2, \ldots h_{c-1})\in \{0,1\}^{c-1}$ such that  $h_i=1$ if $d_i^+(x)=r$, $h_i=0$ if $d_i^+(x)=0$ and $\sum_{i=1}^{c-1} h_i=k$.

\begin{lemma}\label{L M(gi)}
Let $G_{r,c}$ be a balanced $c$-partite tournament and let $x\in V_c$. The number of maximal {tournaments of $G_{r,c}$} for which $x$ has out-degree $k$ is equal to 
$$\sum_{h\in \mathcal{H}_k(x)}\prod_{i=1}^{c-1}d_i^+(x)^{h_i}d_i^-(x)^{1-h_i}.$$   
\end{lemma}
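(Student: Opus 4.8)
The plan is to build an arbitrary maximal tournament of $G_{r,c}$ one partite set at a time, track when the fixed vertex $x\in V_c$ gains an out-neighbour, and count the choices. Fix $x\in V_c$. A partition in maximal tournaments is determined (by Lemma~\ref{particion} and its proof) by choosing, for each $i\in[c-1]$, a bijection between $V_i$ and the $r$ tournaments of the partition; equivalently, it is the choice of which $c$-tournament each vertex of each $V_i$ lands in. The out-degree of $x$ in its own tournament $H$ equals the number of indices $i\in[c-1]$ such that the unique vertex of $V_i\cap V(H)$ lies in $N^+(x)$, plus the contribution from $V_c$, which is $0$ since $V_c$ is independent. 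Now here is the key structural observation that makes the formula finite in form: for the count of maximal tournaments (not strong ones) in which $\deg^+_H(x)=k$, the vertices $V(H)\cap V_i$ can be chosen independently across $i$, but whether that choice lands in $N^+_i(x)$ or $N^-_i(x)$ is only constrained when $d_i^+(x)\in\{0,r\}$ forces it; however, the statement as written restricts $\mathcal H_k(x)$ to vectors recording exactly the forced coordinates. I would reconcile this by reading the lemma as: the relevant generating identity is

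\[
\sum_{k\ge 0}\Big(\#\{\text{maximal tournaments with }\deg^+_H(x)=k\}\Big)\,t^k
=\prod_{i=1}^{c-1}\bigl(d_i^+(x)\,t+d_i^-(x)\bigr),
\]

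which follows immediately because, for each $i$, the vertex of $H$ in $V_i$ can be any of the $r$ vertices of $V_i$ (the remaining $r-1$ distribute freely among the other tournaments, contributing a common factor $(r-1)!$ per part that the lemma normalizes away), of which $d_i^+(x)$ contribute a factor $t$ to the out-degree of $x$ and $d_i^-(x)$ contribute $1$. Extracting the coefficient of $t^k$ from the right-hand product gives exactly $\sum_{h}\prod_{i}d_i^+(x)^{h_i}d_i^-(x)^{1-h_i}$ over all $h\in\{0,1\}^{c-1}$ with $\sum h_i=k$; when $d_i^+(x)\notin\{0,r\}$ both terms are nonzero and the description via $\mathcal H_k(x)$ coincides with the full binomial-type expansion.

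The steps, in order: (1) recall from the proof of Lemma~\ref{particion} the parametrization of maximal-tournament partitions by tuples of bijections, and isolate the single tournament $H\ni x$; (2) observe $\deg^+_H(x)=\sum_{i=1}^{c-1}\mathbf 1[\,V(H)\cap V_i\subseteq N^+(x)\,]$; (3) set up the per-part generating function $d_i^+(x)t+d_i^-(x)$ and multiply over $i\in[c-1]$, justifying the product by independence of the choices of $V(H)\cap V_i$ and cancelling the uniform $(r-1)!$-type overcount so that we are literally counting maximal tournaments containing $x$, not full partitions; (4) read off the coefficient of $t^k$ and rewrite it as the sum over $\mathcal H_k(x)$.

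The main obstacle I expect is step~(3): being careful about \emph{what} is being counted. "The number of maximal tournaments of $G_{r,c}$ for which $x$ has out-degree $k$" must mean the number of $c$-subtournaments $H$ with $x\in V(H)$ and $\deg^+_H(x)=k$ that arise as a block of some partition — equivalently, any transversal $H$ of the parts through $x$ — and one must confirm there is no hidden dependence on the rest of the partition (there is none, since any transversal through $x$ extends to a partition, e.g.\ by Lemma~\ref{particion}). A secondary subtlety is matching the notation $\mathcal H_k(x)$: the lemma's definition of $\mathcal H_k(x)$ as $\{0,1\}$-vectors with $h_i=1\!\Rightarrow\! d_i^+(x)=r$ and $h_i=0\!\Rightarrow\! d_i^+(x)=0$ looks more restrictive than the generic expansion, so I would either argue these extra conditions are vacuous for the indices where $0<d_i^+(x)<r$ (both $d_i^+(x)^{h_i}$ and $d_i^-(x)^{1-h_i}$ stay positive regardless), or note a typo and prove the clean product-coefficient form, which is what the Main Result actually needs.
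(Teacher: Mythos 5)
Your argument is correct and is essentially the paper's own proof: both count the transversals through $x$ by choosing one vertex from each $V_i$ independently, recording in a $\{0,1\}$-vector which parts contribute an out-neighbour of $x$, which is exactly the coefficient-of-$t^k$ extraction from $\prod_{i=1}^{c-1}\bigl(d_i^+(x)\,t+d_i^-(x)\bigr)$ that you describe. Your worry about $\mathcal{H}_k(x)$ rests on reading the conditions backwards --- the paper forces $h_i=1$ \emph{when} $d_i^+(x)=r$ and $h_i=0$ \emph{when} $d_i^+(x)=0$ (leaving $h_i$ free otherwise), so the restricted sum only omits terms that vanish and coincides with your product-coefficient form; no typo and no gap.
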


For each $x\in V(T)$ let $T^+(x)$  (resp.  $T^-(x)$ ) be the number of maximal {tournaments}  of $G_{r,c}$ for which $x$ has out-degree (resp. in-degree)  at most $\lceil\frac{c-2}{4}\rceil$.  The following Lemma provides an upper bound for $T^+(x)$. An analogous result for $T^-(x)$ can be obtained using similar arguments.

\begin{lemma}\label{promedio}  Let $G_{r,c}$  be a balanced $c$-partite tournament such that  $\delta(G_{r,c}) \geq \lfloor\frac{c-2}{4}\rfloor (r + \mu) + max \{\delta^+_V, \delta^-_V\}$.  
Then, for every $x\in V(G_{r,c})$, $$T^+(x) \leq \sum\limits_{k=0}^{\left\lfloor \frac{c-2}{4} \right\rfloor} {{c-1}\choose {k}} \left(\frac{d^+(x)}{d^-(x)}\right)^k \left(\frac{d^-(x)}{c-1}\right)^{c-1}.$$

Moreover, if  $\lceil\frac{c-2}{4}\rceil  < |\{i : d_i^+(x) = r\}| $,  $T^+(x) = 0$.  
\end{lemma}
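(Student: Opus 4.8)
The plan is to use Lemma~\ref{L M(gi)} to rewrite $T^+(x)$ as the lower tail of a sum of independent $0/1$ variables and then compare it with a genuine binomial. Relabelling the partite sets we may assume $x\in V_c$; put $\ell:=\lfloor\frac{c-2}{4}\rfloor$ and $p_i:=d_i^+(x)/r$ for $i\in[c-1]$, so $1-p_i=d_i^-(x)/r$, and let $M_k(x)$ be the count in Lemma~\ref{L M(gi)}, so $T^+(x)=\sum_{k=0}^{\ell}M_k(x)$. The constraints defining $\mathcal H_k(x)$ only delete terms that already vanish (they force $h_i=1$ exactly when the factor $d_i^+(x)$ would otherwise be $0$, and $h_i=0$ exactly when $d_i^-(x)$ would be $0$), so
$$T^+(x)\;=\;\sum_{\substack{S\subseteq[c-1]\\|S|\le\ell}}\ \prod_{i\in S}d_i^+(x)\,\prod_{i\notin S}d_i^-(x)\;=\;r^{\,c-1}\,\Pr\!\big[\,B_1+\dots+B_{c-1}\le\ell\,\big],$$
where $B_1,\dots,B_{c-1}$ are independent with $\Pr[B_i=1]=p_i$. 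The last assertion of the lemma is then immediate: if $\ell<|\{i:d_i^+(x)=r\}|=|\{i:p_i=1\}|$, then $B_1+\dots+B_{c-1}>\ell$ with probability $1$, so $T^+(x)=0$ — combinatorially, a maximal tournament through $x$ is forced to contain an out-neighbour of $x$ in every part on which $x$ dominates.

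For the main bound, set $\bar p:=\frac1{c-1}\sum_i p_i=\frac{d^+(x)}{(c-1)r}$; since $r\bar p=\frac{d^+(x)}{c-1}$ and $r(1-\bar p)=\frac{d^-(x)}{c-1}$, a one-line computation gives
$$r^{\,c-1}\binom{c-1}{k}\bar p^{\,k}(1-\bar p)^{\,c-1-k}\;=\;\binom{c-1}{k}\left(\frac{d^+(x)}{d^-(x)}\right)^{\!k}\!\left(\frac{d^-(x)}{c-1}\right)^{\!c-1}.$$
Summing over $k\le\ell$, the inequality to prove is exactly
$$\Pr\!\big[\,B_1+\dots+B_{c-1}\le\ell\,\big]\;\le\;\Pr\!\big[\,\mathrm{Bin}(c-1,\bar p)\le\ell\,\big],$$
i.e.\ among all sums of $c-1$ independent Bernoulli variables with mean $d^+(x)/r$, the one with all parameters equal maximises the probability of being $\le\ell$. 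This is a classical extremal fact of Hoeffding type, valid exactly when $\ell$ lies below an appropriate fraction of the mean $d^+(x)/r$; and that is what the hypothesis provides, since $d^+(x)\ge\delta(G_{r,c})\ge\lfloor\frac{c-2}{4}\rfloor(r+\mu)+\max\{\delta_V^+,\delta_V^-\}$ pushes $d^+(x)/r$ above $\ell$ by the margin contributed by the terms $\ell\mu$ and $\max\{\delta_V^+,\delta_V^-\}$. A more hands-on route, avoiding that black box, is to extract $\prod_i d_i^-(x)\le\big(d^-(x)/(c-1)\big)^{c-1}$ (AM--GM) from $T^+(x)$, leaving $\sum_{j\le\ell}e_j\!\big(d_1^+(x)/d_1^-(x),\dots,d_{c-1}^+(x)/d_{c-1}^-(x)\big)$, bound each elementary symmetric polynomial $e_j$ by Maclaurin's inequality, and then use $|d_i^+(x)-d_j^+(x)|\le\mu$ to pass from the average of the ratios to $d^+(x)/d^-(x)$.

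The step I expect to be the real obstacle is precisely this last comparison: naive convexity points the wrong way, since $u\mapsto u/(r-u)$ is convex and Jensen gives $\frac1{c-1}\sum_i d_i^+(x)/d_i^-(x)\ge d^+(x)/d^-(x)$, which overshoots; likewise a perturbation analysis shows $\sum_{k\le\ell}M_k(x)$ usually \emph{exceeds} its ``perfectly regular'' value when the part-degrees spread out. What saves it is that the hypothesis simultaneously keeps all $d_i^\pm(x)$ within $\mu$ of one another and keeps $d^+(x),d^-(x)$ each at least a fixed fraction of $(c-1)r$, so that $\prod_i d_i^-(x)$, the ratio sum and the true tail are all within a controlled factor of their regular values, and the discrepancy is swallowed by the slack built into $\ell(r+\mu)+\max\{\delta_V^+,\delta_V^-\}$; equivalently, one is exactly in the regime where the Hoeffding comparison holds. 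The bound for $T^-(x)$ follows by the identical argument with in- and out-neighbourhoods (and $\delta_V^+,\delta_V^-$) interchanged.
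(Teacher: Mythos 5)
Your reduction of $T^+(x)$ to $r^{c-1}\Pr[B_1+\cdots+B_{c-1}\le \ell]$ for independent Bernoulli variables with parameters $p_i=d_i^+(x)/r$, the identity showing that the claimed upper bound is exactly $r^{c-1}$ times the binomial tail with $\bar p=d^+(x)/(r(c-1))$, and the deduction of the ``moreover'' clause are all correct and consistent with Lemma~\ref{L M(gi)}. The gap is precisely the step you flag as the obstacle: the comparison $\Pr[\sum B_i\le\ell]\le\Pr[\mathrm{Bin}(c-1,\bar p)\le\ell]$ is not proved, only attributed to ``a classical extremal fact of Hoeffding type.'' Hoeffding's theorem in the form you need (equal parameters maximize the lower tail at level $\ell$) requires $\sum_i p_i\ge\ell+1$, i.e.\ $d^+(x)\ge r(\lfloor\frac{c-2}{4}\rfloor+1)$; the lemma's hypothesis only gives $d^+(x)\ge\lfloor\frac{c-2}{4}\rfloor(r+\mu)+\max\{\delta_V^+,\delta_V^-\}$, which falls short of $r(\ell+1)$ whenever $\ell\mu+\max\{\delta_V^+,\delta_V^-\}<r$. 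That regime is realizable with unequal $p_i$: for $c=10$, $\ell=2$, take every vertex other than $x$ part-regular with $d_i^\pm(y)\approx r/2$ and let $d_i^+(x)\approx 0.32r$ for all $i$ with a small perturbation; then $\mu\approx 0.18r$, $\max\{\delta_V^+,\delta_V^-\}\approx r/2$, the hypothesis $\delta\ge 2(r+\mu)+r/2\approx 2.86r$ is met by $\delta=d^+(x)=2.88r$, yet $d^+(x)<3r=r(\ell+1)$, so the black box does not apply (and for $\sum p_i\le\ell$ Hoeffding's inequality actually points the other way). Your fallback route (AM--GM on $\prod_i d_i^-(x)$ plus Maclaurin on the elementary symmetric functions of the ratios) is, as you yourself observe, blocked by Jensen overshooting, and you leave that unresolved.

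The paper closes exactly this gap with a bespoke smoothing argument (Proposition~\ref{general}): repeatedly replace the largest and smallest $d_i^+(x)$ by their average and show that the truncated sum $\sum_{k\le q}M(\cdot;k)$ cannot decrease; the heart of the matter is the monotonicity $M(g_{[c-3]};q-1)\le M(g_{[c-3]};q)$, derived from the tailored condition $\sum_i g_i\ge q(r+\Gamma-\gamma)+\gamma$ (with $\Gamma,\gamma$ the extreme part-degrees of $x$), which is genuinely weaker than $\sum_i g_i\ge r(q+1)$ and is what the lemma's degree hypothesis actually delivers. To repair your argument you would either have to prove this sharper comparison yourself (essentially reproducing the paper's Claims 1--2) or strengthen the hypothesis to $\delta(G_{r,c})\ge r(\lfloor\frac{c-2}{4}\rfloor+1)$, which changes the statement.
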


\begin{lemma}\label{suma} Let $G_{r,c}$  be a balanced $c$-partite tournament, with $c\geq 10$, such that  $\delta(G_{r,c}) \geq r(c-1)\left(\frac{c+6}{4(c+1)}\right)$. Then, for every $x\in V_c$, 
 \[
 \sum\limits_{k=0}^{ \lfloor\frac{c-2}{4}\rfloor}{{c-1}\choose {k}} \left(\frac{d^+(x)}{d^-(x)}\right)^k  < 
\begin{cases}
{{c-1}\choose {\lfloor\frac{c-2}{4}\rfloor}}\left(\frac{3c-2}{2c-4}\right) \left(\dfrac{d^+(x)}{d^-(x)}\right)^{ \lfloor\frac{c-2}{4}\rfloor} & \hbox{ if }  d^+(x)\geq d^-(x); \\
{{c-1}\choose {\lfloor\frac{c-2}{4}\rfloor}}\left(\frac{3c-2}{2c-4}\right) \left(\dfrac{d^-(x)}{d^+(x)}\right)^{ \lfloor\frac{c-2}{4}\rfloor} & \hbox{ if }  d^+(x)<d^-(x) .
\end{cases}
\] \end{lemma}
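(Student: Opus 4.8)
The plan is to reduce the stated inequality to one elementary estimate on a partial sum of binomial coefficients and then verify that estimate by a ratio‑and‑geometric‑series argument. Write $m:=\lfloor\frac{c-2}{4}\rfloor$ (so $m\ge 2$, since $c\ge 10$) and fix $x\in V_c$. Because $d^{+}(x)+d^{-}(x)=r(c-1)$ and the hypothesis $\delta(G_{r,c})\ge r(c-1)\frac{c+6}{4(c+1)}>0$ forces $d^{+}(x),d^{-}(x)\ge 1$, both ratios in the statement are well defined. I would first note that the two displayed inequalities are exchanged by passing to the converse digraph (reverse every arc), which interchanges $d^{+}(x)$ and $d^{-}(x)$ and leaves $c$, $m$ and the degree hypothesis unchanged; hence it is enough to treat the case $d^{+}(x)\ge d^{-}(x)$. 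Setting $t:=d^{+}(x)/d^{-}(x)\ge 1$, the goal becomes
\[
\sum_{k=0}^{m}\binom{c-1}{k}t^{k}\;<\;\binom{c-1}{m}\left(\frac{3c-2}{2c-4}\right)t^{m}.
\]

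The first step is to pull out the top term $\binom{c-1}{m}t^{m}$: since $t\ge 1$ and $k\le m$ we have $t^{k-m}\le 1$, so $\sum_{k=0}^{m}\binom{c-1}{k}t^{k}\le t^{m}\sum_{k=0}^{m}\binom{c-1}{k}$, and it suffices to prove $\sum_{k=0}^{m}\binom{c-1}{k}<\binom{c-1}{m}\frac{3c-2}{2c-4}$. For this I would use the identity $\binom{c-1}{j-1}=\frac{j}{c-j}\binom{c-1}{j}$: because $j\mapsto\frac{j}{c-j}$ is increasing, $\binom{c-1}{j-1}\le\frac{m}{c-m}\binom{c-1}{j}$ for all $1\le j\le m$, and iterating gives $\binom{c-1}{m-i}\le\left(\frac{m}{c-m}\right)^{i}\binom{c-1}{m}$ for $0\le i\le m$, where $\frac{m}{c-m}\in(0,1)$ since $1\le m$ and $2m\le\frac{c-2}{2}<c$. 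Summing the truncated geometric series,
\[
\sum_{k=0}^{m}\binom{c-1}{k}\;=\;\sum_{i=0}^{m}\binom{c-1}{m-i}\;<\;\binom{c-1}{m}\sum_{i=0}^{\infty}\left(\frac{m}{c-m}\right)^{i}\;=\;\binom{c-1}{m}\cdot\frac{c-m}{c-2m}.
\]

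To close the argument I would observe that $g(y):=\frac{c-y}{c-2y}$ is strictly increasing on $[0,\frac{c}{2})$ and that $g\!\left(\frac{c+2}{4}\right)=\frac{3c-2}{2c-4}$; since $m\le\frac{c-2}{4}<\frac{c+2}{4}$, this yields $\frac{c-m}{c-2m}=g(m)<\frac{3c-2}{2c-4}$. Chaining the last three displays proves the inequality when $d^{+}(x)\ge d^{-}(x)$, and the case $d^{+}(x)<d^{-}(x)$ then follows from the converse‑digraph symmetry noted at the start.

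I do not expect a genuine obstacle in this argument; the only point that calls for a little care is the last step, namely that the sharp geometric bound $\frac{c-m}{c-2m}$ must be loosened to the weaker form $\frac{3c-2}{2c-4}$ stated in the lemma, which is exactly what the slack $m\le\frac{c-2}{4}<\frac{c+2}{4}$ created by the floor function provides. It is also worth recording that the quantitative hypotheses $c\ge 10$ and $\delta(G_{r,c})\ge r(c-1)\frac{c+6}{4(c+1)}$ enter here only to guarantee $m\ge 2$ and to keep the relevant degrees and denominators positive; the estimate does not otherwise rely on them, and they are retained so that this lemma combines cleanly with the other lemmas of this section in the proof of the Main Result.
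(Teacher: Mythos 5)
Your argument for the case $d^+(x)\ge d^-(x)$ is correct and takes a genuinely different, more elementary route than the paper. Writing $q=\lfloor\frac{c-2}{4}\rfloor$, the paper proves via a recursion on the partial sums that $\sum_{k=0}^{q}\binom{c-1}{k}\bigl(\frac{d^+(x)}{d^-(x)}\bigr)^k<\binom{c-1}{q}\bigl(\frac{d^+(x)}{d^-(x)}\bigr)^q\frac{d^+(x)(c-1-q)}{d^+(x)(c-1-q)-d^-(x)(q+1)}$, whose denominator is positive only because the degree hypothesis yields $d^+(x)(c-1-q)\ge d^-(x)(q+2)$; bounding the resulting correction factor by $\frac{3c-2}{2c-4}$ then requires a separate, rather delicate contradiction argument when $d^+(x)<d^-(x)$. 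You instead discard the weights via $t^k\le t^q$ and bound the unweighted sum $\sum_{k=0}^{q}\binom{c-1}{k}$ by a geometric series with ratio $\frac{q}{c-q}<1$, getting $\binom{c-1}{q}\frac{c-q}{c-2q}<\binom{c-1}{q}\frac{3c-2}{2c-4}$; each step (the identity $\binom{c-1}{j-1}=\frac{j}{c-j}\binom{c-1}{j}$, the monotonicity of $j\mapsto\frac{j}{c-j}$ and of $y\mapsto\frac{c-y}{c-2y}$, and the evaluation at $y=\frac{c+2}{4}$) checks out. What your route buys is the observation that the quantitative hypotheses are not really needed for this lemma beyond making $d^{+}(x)$ and $d^{-}(x)$ positive; the paper's bound is sharper when $d^+(x)/d^-(x)$ is large, but that extra sharpness is never used.

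The one point to repair is your reduction of the second case to the first. The converse digraph does not exchange the two displayed inequalities, because the left-hand side of the lemma is $\sum_{k}\binom{c-1}{k}\bigl(d^+(x)/d^-(x)\bigr)^k$ in \emph{both} cases; applying the first case to the converse tournament gives $\sum_{k}\binom{c-1}{k}\bigl(d^-(x)/d^+(x)\bigr)^k<\binom{c-1}{q}\frac{3c-2}{2c-4}\bigl(d^-(x)/d^+(x)\bigr)^q$, which is not the second displayed inequality. The fix is immediate from what you already proved: if $d^+(x)<d^-(x)$ then $t=d^+(x)/d^-(x)<1$, so $\sum_{k=0}^{q}\binom{c-1}{k}t^k\le\sum_{k=0}^{q}\binom{c-1}{k}<\binom{c-1}{q}\frac{3c-2}{2c-4}\le\binom{c-1}{q}\frac{3c-2}{2c-4}(1/t)^{q}$, which is exactly the second case (alternatively, the symmetry-derived inequality dominates the desired one term by term on the left, since $d^+(x)/d^-(x)<1<d^-(x)/d^+(x)$). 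With that one-line correction the proof is complete.
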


Let $\P$ be the set of all the partitions  of $G_{r,c}$ in maximal tournaments. For each partition  in maximal tournaments, $P$ of $G_{r,c}$ let $\omega (P)$ be the number of vertices $x\in V(G_{r,c})$ such that $\delta_P(x) \leq  \lfloor\frac{c-2}{4}\rfloor$ and let $\omega (G_{r,c}) = \sum\limits_{P \in \P} \omega(P)$.

\begin{theorem}\label{main} Let $G_{r, c}$ be a balanced  $c$-partite tournament, with $c\geq 10$,  $\alpha=\frac{2\Delta (G_{r,c})}{r(c-1)}$ and $\beta=\frac{2\delta (G_{r,c})}{r(c-1)}$.  
 $G_{r, c}$ has a strong partition if

$$\frac{\omega(G_{r,c})}{((r-1)!)^{c-1} rc} \geq {{c-1}\choose {\lfloor\frac{c-2}{4}\rfloor}} \left(\frac{3c-2}{2c-4}\right)  \left(\frac{\alpha}{\beta}\right)^{ \lfloor\frac{c-2}{4}\rfloor}\left(\frac{r}{2}\right)^{c-1}\left(\alpha^{c-1} + \beta^{c-1}\right)$$

 and  $$\delta (G_{r,c}) \geq  \max\left\{r(c-1)\left(\frac{c+6}{4(c+1)}\right),\left\lfloor\frac{c-2}{4}\right\rfloor (r+\mu) + \max\{\delta_V^+,\delta_V^-\}\right \}.$$
\end{theorem}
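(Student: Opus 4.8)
The plan is to argue by a counting (double-counting / averaging) argument over the set $\P$ of all partitions of $G_{r,c}$ in maximal tournaments, using the fact (Lemma~\ref{particion}) that $|\P| = (r!)^{c-1}$. I would call a vertex $x$ \emph{bad} for a partition $P$ if in the tournament of $P$ containing $x$ either the out-degree or the in-degree of $x$ is at most $\lfloor\frac{c-2}{4}\rfloor$; recall that a tournament of order $c$ in which every vertex has out-degree and in-degree exceeding $\lfloor\frac{c-2}{4}\rfloor$ is necessarily strongly connected (this is the standard degree/connectivity bound for tournaments, presumably invoked elsewhere in the paper, and it is why the threshold $\lceil\frac{c-2}{4}\rceil$ appears in the definition of $T^+(x)$ and $\omega$). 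Thus a partition with no bad vertex is a strong partition, and it suffices to show that the total number of bad (vertex, partition) incidences is strictly less than the total number of (vertex, partition) incidences in which the vertex is \emph{good} — or more precisely, to show that $\omega(G_{r,c})$, the total count of bad incidences, is strictly smaller than something forcing the existence of at least one $P$ with $\omega(P)=0$.

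The key quantitative step is to bound $\omega(G_{r,c})$ from above. By definition $\omega(G_{r,c}) = \sum_{P}\omega(P) = \sum_{x\in V}\bigl(T^+(x)+T^-(x)\bigr)$ after summing over all partitions, since $T^+(x)$ (resp.\ $T^-(x)$) counts exactly the maximal tournaments — equivalently, up to the multiplicity $((r-1)!)^{c-1}$ of ways to complete a given tournament through $x$ to a full partition — for which $x$ is out-bad (resp.\ in-bad). So $\omega(G_{r,c}) \le ((r-1)!)^{c-1}\sum_{x\in V}\bigl(T^+(x)+T^-(x)\bigr)$ where now $T^\pm(x)$ are the per-vertex counts of Lemma~\ref{promedio}. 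I would feed each term into Lemma~\ref{promedio}, then into Lemma~\ref{suma}, to get for each $x$
$$T^+(x)+T^-(x) \;\le\; {c-1\choose \lfloor\frac{c-2}{4}\rfloor}\Bigl(\frac{3c-2}{2c-4}\Bigr)\Bigl(\frac{\Delta}{\delta}\Bigr)^{\lfloor\frac{c-2}{4}\rfloor}\Bigl(\frac{1}{c-1}\Bigr)^{c-1}\bigl(d^+(x)^{c-1}+d^-(x)^{c-1}\bigr),$$
using $d^+(x)/d^-(x)\le \Delta/\delta$ and $d^-(x)/d^+(x)\le\Delta/\delta$ to uniformize the ratio, and bounding $d^\pm(x)\le\Delta$ in the remaining factor (this is where $\alpha=2\Delta/(r(c-1))$ and $\beta=2\delta/(r(c-1))$ enter: $d^\pm(x)^{c-1}/(c-1)^{c-1}\le \Delta^{c-1}/(c-1)^{c-1} = (\alpha r/2)^{c-1}$, and similarly the second bound gives the $\beta^{c-1}$ term; both appear because the bad count splits into a $d^+\ge d^-$ regime and a $d^+<d^-$ regime). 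Summing over the $rc$ vertices and dividing by $((r-1)!)^{c-1}rc$ produces exactly the right-hand side of the hypothesized inequality. Hence the hypothesis says $\frac{\omega(G_{r,c})}{((r-1)!)^{c-1}rc}$ is at least this upper bound.

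Finally I would close the argument: the hypothesis on $\delta(G_{r,c})$ is precisely what is needed to apply Lemmas~\ref{promedio} and~\ref{suma} (the two branches of the max are their respective hypotheses), so the chain of inequalities is legitimate. Then, if $\omega(G_{r,c}) > 0$ forced every partition to have a bad vertex, averaging would give $\omega(P)\ge 1$ for all $P$, hence $\omega(G_{r,c})\ge|\P|=(r!)^{c-1}$; but combined with the upper bound just derived and the hypothesised reverse inequality, one gets a contradiction unless some $P$ has $\omega(P)=0$ — I would check the book-keeping so that the displayed hypothesis is exactly the threshold that makes ``$\omega(G_{r,c})$ small enough to force a partition with no bad vertex'' work, i.e.\ the inequality is set up so that its negation (all partitions bad) contradicts the upper bound on $\omega$. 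The main obstacle I anticipate is the combinatorial book-keeping linking the \emph{global} quantity $\omega(G_{r,c})$ to the \emph{per-vertex} quantities $T^\pm(x)$: one must verify that each maximal tournament through a fixed vertex $x$ extends to exactly $((r-1)!)^{c-1}$ partitions (an instance of Lemma~\ref{particion} applied to the remaining $r-1$ vertices per part), and that no double counting occurs when a vertex is simultaneously out-bad and in-bad or when several vertices in the same tournament are bad — the latter is harmless since $\omega(P)$ simply counts such vertices, but it must be stated carefully. Once that bridge is in place, the rest is substitution of Lemmas~\ref{promedio} and~\ref{suma} and the elementary estimates $d^\pm(x)\le\Delta$, $\delta\le d^\pm(x)$.
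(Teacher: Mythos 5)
Your overall strategy is the paper's: double-count bad (vertex, partition) incidences, convert the partition counts $F^{\pm}(x)$ into the maximal-tournament counts $T^{\pm}(x)$ via the multiplicity $((r-1)!)^{c-1}$, and feed Lemmas~\ref{promedio} and~\ref{suma} into the resulting average (the paper selects a single vertex $x_0$ achieving at least the average of $F^++F^-$ rather than summing the upper bound over all $rc$ vertices, but the two are interchangeable). The one step that fails as you describe it is the last one. You propose to obtain the factor $\left(\frac{r}{2}\right)^{c-1}\left(\alpha^{c-1}+\beta^{c-1}\right)$ from the ``elementary estimates $d^{\pm}(x)\le\Delta$, $\delta\le d^{\pm}(x)$''; but bounding both $d^+(x)^{c-1}$ and $d^-(x)^{c-1}$ by $\Delta^{c-1}$ yields $2\left(\frac{\alpha r}{2}\right)^{c-1}$, which is strictly larger than the target whenever $\beta<\alpha$, and no individual bound on $d^+(x)$ or $d^-(x)$ produces the $\beta^{c-1}$ term (with $d^+(x)\ge d^-(x)$, the quantity $d^-(x)$ can be as large as $r(c-1)/2$, well above $\delta=\beta r/2$, so $(d^-(x))^{c-1}\le\delta^{c-1}$ is false in general). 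What is actually needed is the constraint $d^+(x)+d^-(x)=r(c-1)=\Delta+\delta$ together with $\delta\le d^-(x)\le d^+(x)\le\Delta$ and the convexity of $t\mapsto t^{c-1}$: for a fixed sum, $(d^-(x))^{c-1}+(d^+(x))^{c-1}$ is maximized at the extreme pair $(\delta,\Delta)$, which gives exactly $\left(\frac{r}{2}\right)^{c-1}\left(\beta^{c-1}+\alpha^{c-1}\right)$. This is the paper's closing step and it is essential to land on the stated right-hand side.

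A smaller point: your worry about the logical bookkeeping is warranted but resolves simply. The paper argues by contradiction --- assume the degree hypothesis and the nonexistence of a strong partition, derive $\frac{\omega(G_{r,c})}{((r-1)!)^{c-1}rc}<$ RHS, and contradict the first displayed hypothesis. The observation that every non-strong partition contributes $\omega(P)\ge 1$, hence $\omega(G_{r,c})\ge(r!)^{c-1}$, is not part of this theorem's proof; it enters only in Corollary~\ref{unmalo}, so you should not fold it into the argument here. Your appeal to the fact that a non-strong $c$-tournament has a vertex of minimum (in- or out-) degree at most $\lfloor\frac{c-2}{4}\rfloor$ is likewise only needed for the corollary, not for the theorem itself.
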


\begin{proof}

Let $G_{r, c}$ be such that $\delta (G_{r,c}) \geq \max\{ r(c-1)\frac{c+6}{4(c+1)}, \lfloor\frac{c-2}{4}\rfloor(r+\mu) + \max\{\delta^+_V,\delta^-_V \}\}$ and suppose there is no strong partition.   
For each $x\in V(G_{r,c})$, let $F^+(x)$ (resp. $F^-(x)$ )  be the number of partitions  $P$ of $G_{r,c}$ for which  $d_P^+(x) \leq \lfloor\frac{c-2}{4}\rfloor$ (resp. $d_P^-(x) \leq \lfloor\frac{c-2}{4}\rfloor$).  
If follows that   $\sum\limits_{x\in V(G_{r,c}) } (F^+(x)+ F^-(x))  = \omega (G_{r,c})$ and, by an average argument, we can notice that there exists $x_0\in V(G_{r,c})$ such that 
\begin{equation}\label{m1} F^+(x_0) + F^-(x_0) \geq\frac{\omega (G_{r,c})}{rc}.
\end{equation}
Notice that each maximal tournament is a member of  $((r-1)!)^{c-1}$ partitions $P$ of $G_{r,c}$ in maximal tournaments (using the same argument as in the proof of Lemma 1). Then,  $F^+(x_0) = ((r-1)!)^{c-1} T^+(x_0) $ and $F^-(x_0) = ((r-1)!)^{c-1} T^-(x_0)$. Thus, by (\ref{m1}), 
\begin{equation}\label{m2} 
T^+(x_0) + T^-(x_0) \geq  \frac{\omega(G_{r,c})}{((r-1)!)^{c-1}rc }.
\end{equation}

Assume, w.l.o.g, $d^+(x_0)\geq d^-(x_0)$. Since $\delta (G_{r,c}) \geq  \lfloor\frac{c-2}{4}\rfloor(r+\mu) + \max\{\delta^+_V,\delta^-_V \}$, by Lemma \ref{promedio}, 
$$T^+(x_0)\leq \sum\limits_{k=0}^{\left\lfloor \frac{c-2}{4} \right\rfloor} {{c-1}\choose {k}} \left(\frac{d^+(x_0)}{d^-(x_0)}\right)^k \left(\frac{d^-(x_0)}{c-1}\right)^{c-1}.$$ 

By hypothesis, $\delta (G_{r,c}) \geq  r(c-1)\frac{c+6}{4(c+1)}$, then by Lemma \ref{suma} 

$$T^+(x_0)<  {{c-1}\choose {\lfloor\frac{c-2}{4}\rfloor}}\left(\frac{3c-2}{2c-4}\right) \left(\dfrac{d^+(x_0)}{d^-(x_0)}\right)^{ \lfloor\frac{c-2}{4}\rfloor} \left(\frac{d^-(x_0)}{c-1}\right)^{c-1},$$
and since $\frac{d^+(x_0)}{d^-(x_0)}\leq \frac{\Delta(G_{r,c})}{\delta(G_{r,c})}= \frac{\alpha}{\beta}$
$$T^+(x_0)< {{c-1}\choose {\lfloor\frac{c-2}{4}\rfloor}}\left(\frac{3c-2}{2c-4}\right) \left(\dfrac{\alpha}{\beta}\right)^{ \lfloor\frac{c-2}{4}\rfloor} \left(\frac{d^-(x_0)}{c-1}\right)^{c-1}.$$

Analogously, since  $d^+(x_0)\geq d^-(x_0)$, by Lemmas \ref{promedio} and \ref{suma}, we can see that 
$$T^-(x_0) <  {{c-1}\choose {\lfloor\frac{c-2}{4}\rfloor}}\left(\frac{3c-2}{2c-4}\right) \left(\dfrac{\alpha}{\beta}\right)^{ \lfloor\frac{c-2}{4}\rfloor} \left(\frac{d^+(x_0)}{c-1}\right)^{c-1}.$$

Thus,  by (\ref{m2}),  
$$\frac{\omega(G_{r,c})}{((r-1)!)^{c-1}rc} <  {{c-1}\choose {\lfloor\frac{c-2}{4}\rfloor}}\left(\frac{3c-2}{2c-4}\right) \left(\dfrac{\alpha}{\beta}\right)^{ \lfloor\frac{c-2}{4}\rfloor} \left(\left(\frac{d^-(x_0)}{c-1}\right)^{c-1}+  \left(\frac{d^+(x_0)}{c-1}\right)^{c-1}\right).$$  

Finally, since $d^+(x_0) + d^-(x_0) = r(c-1) = \Delta(G_{r,c}) + \delta(G_{r,c})$, with $\Delta(G_{r,c}) \geq d^+(x_0) \geq d^-(x_0)\ge\delta(G_{r,c})$, we see that  $$\left(\dfrac{d^-(x_0)}{c-1}\right)^{c-1} + \left(\dfrac{d^+(x_0)}{c-1}\right)^{c-1}\leq  \left(\dfrac{\delta(G_{r,c})}{c-1}\right)^{c-1} + \left(\dfrac{\Delta(G_{r,c})}{c-1}\right)^{c-1},$$  and $ \left(\dfrac{\delta(G_{r,c})}{c-1}\right)^{c-1} + \left(\dfrac{\Delta(G_{r,c})}{c-1}\right)^{c-1} =\left(\frac{\beta r}{2}\right)^{c-1} + \left(\frac{\alpha r}{2}\right)^{c-1} = \left(\frac{r}{2}\right)^{c-1}\left(\alpha^{c-1} + \beta^{c-1}\right). $

Therefore,
$$\frac{\omega(G_{r,c})}{((r-1)!)^{c-1} rc} < {{c-1}\choose {\lfloor\frac{c-2}{4}\rfloor}}\left(\frac{3c-2}{2c-4}\right) \left(\frac{\alpha}{\beta}\right)^{ \lfloor\frac{c-2}{4}\rfloor}\left(\frac{r}{2}\right)^{c-1}\left(\alpha^{c-1} + \beta^{c-1}\right)$$
and the result follows. 
 \end{proof}

\begin{corollary}\label{unmalo}
 Let $G_{r, c}$ be a balanced  $c$-partite tournament, with $c\geq 10$,  $\alpha=\frac{2\Delta (G_{r,c})}{r(c-1)}$ and $\beta=\frac{2\delta (G_{r,c})}{r(c-1)}$.  Then, $G_{r, c}$ has a strong partition if

$$\frac{1}{r} \geq\sqrt{c}\left(\frac{9}{7}\right)\left(\frac{2}{3^\frac{3}{4}}\right)^{c-1}  \left(\frac{\alpha}{\beta}\right)^{ \lfloor\frac{c-2}{4}\rfloor}\left(\alpha^{c-1} + \beta^{c-1}\right)$$

 and  $$\delta (G_{r,c}) \geq  \max\left\{r(c-1)\left(\frac{c+6}{4(c+1)}\right),\left\lfloor\frac{c-2}{4}\right\rfloor (r+\mu) + \max\{\delta_V^+,\delta_V^-\}\right \}.$$
\end{corollary}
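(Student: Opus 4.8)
The plan is to derive Corollary \ref{unmalo} directly from Theorem \ref{main} by showing that, under the numerical hypothesis $\frac{1}{r}\geq\sqrt{c}\left(\frac{9}{7}\right)\left(\frac{2}{3^{3/4}}\right)^{c-1}\left(\frac{\alpha}{\beta}\right)^{\lfloor\frac{c-2}{4}\rfloor}\left(\alpha^{c-1}+\beta^{c-1}\right)$, the main inequality of Theorem \ref{main} is automatically satisfied. Since the degree condition is identical in both statements, the only work is to bound the left-hand side $\frac{\omega(G_{r,c})}{((r-1)!)^{c-1}rc}$ from below and the right-hand side $\binom{c-1}{\lfloor\frac{c-2}{4}\rfloor}\left(\frac{3c-2}{2c-4}\right)\left(\frac{\alpha}{\beta}\right)^{\lfloor\frac{c-2}{4}\rfloor}\left(\frac{r}{2}\right)^{c-1}\left(\alpha^{c-1}+\beta^{c-1}\right)$ from above, and then check that the given simpler hypothesis forces LHS $\geq$ RHS.

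First I would handle $\omega(G_{r,c})$: if $G_{r,c}$ has no strong partition then every partition $P\in\P$ contains at least one non-strong maximal tournament, hence at least one vertex $x$ with $\delta_P(x)\leq\lfloor\frac{c-2}{4}\rfloor$ (a tournament of order $c$ in which some vertex has in- or out-degree at most $\lfloor\frac{c-2}{4}\rfloor<\frac{c-1}{2}$ need not be non-strong in general, but the relevant direction here is that a non-strong maximal tournament always yields such a low-degree vertex — a vertex in the "first" strong component has small in-degree or a vertex in the "last" one has small out-degree). Thus $\omega(P)\geq 1$ for every $P$, and by Lemma \ref{particion} the number of partitions is $(r!)^{c-1}$, so $\omega(G_{r,c})=\sum_{P\in\P}\omega(P)\geq(r!)^{c-1}$. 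Dividing, $\frac{\omega(G_{r,c})}{((r-1)!)^{c-1}rc}\geq\frac{(r!)^{c-1}}{((r-1)!)^{c-1}rc}=\frac{r^{c-1}}{rc}=\frac{r^{c-2}}{c}$. So it suffices to show $\frac{r^{c-2}}{c}\geq\binom{c-1}{\lfloor\frac{c-2}{4}\rfloor}\left(\frac{3c-2}{2c-4}\right)\left(\frac{\alpha}{\beta}\right)^{\lfloor\frac{c-2}{4}\rfloor}\left(\frac{r}{2}\right)^{c-1}\left(\alpha^{c-1}+\beta^{c-1}\right)$, i.e. $\frac{1}{r}\geq c\binom{c-1}{\lfloor\frac{c-2}{4}\rfloor}\left(\frac{3c-2}{2c-4}\right)\left(\frac{1}{2}\right)^{c-1}\left(\frac{\alpha}{\beta}\right)^{\lfloor\frac{c-2}{4}\rfloor}\left(\alpha^{c-1}+\beta^{c-1}\right)$.

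Next I would bound the constant $c\binom{c-1}{\lfloor\frac{c-2}{4}\rfloor}\left(\frac{3c-2}{2c-4}\right)$. For $c\geq 10$ one has $\frac{3c-2}{2c-4}\leq\frac{9}{7}$ (the ratio is decreasing in $c$, and at $c=10$ it equals $\frac{28}{16}=\frac{7}{4}$ — so in fact I'd need to absorb the larger value; more carefully, $\frac{3c-2}{2c-4}\cdot\frac{7}{9}\to\frac{7}{6}$, so this bound needs the factor from the binomial estimate to compensate). The key estimate is the entropy-type bound $\binom{c-1}{\lfloor\frac{c-2}{4}\rfloor}\leq 2^{(c-1)H(1/4)}$ where $H(1/4)=\frac{1}{4}\log_2 4+\frac{3}{4}\log_2\frac{4}{3}=2-\frac{3}{4}\log_2 3$, so $2^{(c-1)H(1/4)}=\left(\frac{4}{3^{3/4}}\right)^{c-1}$. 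Combining with the $\left(\frac{1}{2}\right)^{c-1}$ factor gives $\left(\frac{2}{3^{3/4}}\right)^{c-1}$, which matches the corollary's base. The remaining polynomial factor $c\cdot\frac{3c-2}{2c-4}$ must be absorbed into $\sqrt{c}\cdot\frac{9}{7}$; this is where I expect the main obstacle, since naively $c\cdot\frac{3c-2}{2c-4}\sim\frac{3c}{2}$ grows faster than $\frac{9}{7}\sqrt{c}$. The resolution must come from a sharper bound on the central binomial coefficient than the crude entropy bound — specifically, $\binom{c-1}{\lfloor\frac{c-2}{4}\rfloor}\leq\frac{C}{\sqrt{c}}\left(\frac{4}{3^{3/4}}\right)^{c-1}$ for a suitable constant $C$, coming from Stirling's approximation (the $\binom{n}{pn}$ asymptotic carries a $\frac{1}{\sqrt{2\pi n p(1-p)}}$ factor). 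Then $c\cdot\frac{3c-2}{2c-4}\cdot\frac{C}{\sqrt{c}}\sim\frac{3C}{2}\sqrt{c}$, and one checks $\frac{3C}{2}\leq\frac{9}{7}$ for the explicit constant obtained, valid for all $c\geq 10$. I would verify the base case $c=10$ (and perhaps $c=11,12,13$) by direct computation since asymptotic Stirling bounds are least tight there, and for larger $c$ invoke monotonicity of the relevant ratio. Assembling these estimates yields $c\binom{c-1}{\lfloor\frac{c-2}{4}\rfloor}\left(\frac{3c-2}{2c-4}\right)\left(\frac{1}{2}\right)^{c-1}\leq\sqrt{c}\left(\frac{9}{7}\right)\left(\frac{2}{3^{3/4}}\right)^{c-1}$, so the corollary's hypothesis implies the displayed inequality above, which implies the inequality of Theorem \ref{main}, and the conclusion follows.
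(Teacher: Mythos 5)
Your proposal follows essentially the same route as the paper: assume no strong partition, use $\omega(P)\ge 1$ (via the fact that a non-strong $c$-tournament has a vertex of minimum degree at most $\lfloor\frac{c-2}{4}\rfloor$) and Lemma \ref{particion} to get $\omega(G_{r,c})\ge (r!)^{c-1}$, reduce the hypothesis of Theorem \ref{main} to the purely numerical inequality $\frac{1}{rc}\ge \binom{c-1}{\lfloor\frac{c-2}{4}\rfloor}\left(\frac{3c-2}{2c-4}\right)\left(\frac{1}{2}\right)^{c-1}\left(\frac{\alpha}{\beta}\right)^{\lfloor\frac{c-2}{4}\rfloor}\left(\alpha^{c-1}+\beta^{c-1}\right)$, and then prove a $1/\sqrt{c}$-corrected entropy bound on the binomial coefficient, checking $c=10,\dots,13$ directly and pushing to larger $c$ by monotonicity of the step-four ratio. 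This is exactly the paper's proof, which formalizes the last step as an induction $p(c)\Rightarrow p(c+4)$ on the bound $\binom{c-1}{\lfloor\frac{c-2}{4}\rfloor}\le \frac{9}{7\sqrt{c}}\left(\frac{4}{3^{3/4}}\right)^{c-1}\frac{2c-4}{3c-2}$.

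One concrete warning about the Stirling sub-step as you state it. The "explicit constant" from the local approximation of $\binom{n}{n/4}$ is $C=\frac{1}{\sqrt{2\pi\cdot\frac{3}{16}}}\approx 0.92$, and $\frac{3}{2}\cdot 0.92\approx 1.38>\frac{9}{7}$, so your check "$\frac{3C}{2}\le\frac{9}{7}$" fails for that constant; indeed the target inequality is false for the binomial evaluated at exactly $\frac{c-1}{4}$. It holds here only because $\lfloor\frac{c-2}{4}\rfloor\le\frac{c-2}{4}<\frac{c-1}{4}$: being off-center by at least $\frac14$ contributes an extra factor of roughly $3^{-1/4}\approx 0.76$, which is what brings the effective constant under the required $\frac{9}{7}\cdot\frac{2c-4}{3c-2}$ (and even then the margin at $c=10$ is razor thin: $36$ versus $36.65$). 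The paper's step-four induction sidesteps this entirely because it tracks the exact quantity and preserves the residue of $c$ modulo $4$; your fallback of verifying the four base cases and bounding the consecutive ratio is precisely that argument and does close the gap, so just do not lean on the literal central-limit constant.
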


\begin{proof}

 Let $G_{r, c}$ be a balanced  $c$-partite tournament, with $c\geq 10$, and with no strong partition. For each partition $P$ of $G_{r,c}$, $\omega (P) \geq 1$, because  every tournament of order $c$ that is not strongly connected has minimum degree at most  $\lfloor\frac{c-2}{4}\rfloor$ and since $P$ is not a strong partition, at least one of its tournaments is not strongly connected. Thus, by Lemma \ref{particion},  $\omega(G_{r,c}) \geq (r!)^{c-1}$. By  the Main Theorem,

$$\frac{(r!)^{c-1}}{((r-1)!)^{c-1} rc} \geq {{c-1}\choose {\lfloor\frac{c-2}{4}\rfloor}} \left(\frac{3c-2}{2c-4}\right)  \left(\frac{\alpha}{\beta}\right)^{ \lfloor\frac{c-2}{4}\rfloor}\left(\frac{r}{2}\right)^{c-1}\left(\alpha^{c-1} + \beta^{c-1}\right).$$ Simplifying this inequality we obtain that
\begin{equation}\label{sinomega}
\frac{1}{rc} \geq {{c-1}\choose {\lfloor\frac{c-2}{4}\rfloor}} \left(\frac{3c-2}{2c-4}\right)  \left(\frac{\alpha}{\beta}\right)^{ \lfloor\frac{c-2}{4}\rfloor}\left(\frac{1}{2}\right)^{c-1}\left(\alpha^{c-1} + \beta^{c-1}\right)
\end{equation}
We will prove by induction that  
\begin{equation}\label{bound}
 p(c): {{c-1}\choose {\lfloor\frac{c-2}{4}\rfloor}} \leq \left(\frac{9}{7\sqrt{c}}\right)\left(\frac{4}{3^\frac{3}{4}}\right)^{c-1} \left(\frac{2c-4}{3c-2}\right) 
\end{equation}
Base cases holds:

\begin{tabular}{c|c|l}
$c$ & ${{c-1}\choose {\lfloor\frac{c-2}{4}\rfloor}}$  & $\leq$ $\left(\frac{9}{7\sqrt{c}}\right)\left(\frac{4}{3^\frac{3}{4}}\right)^{c-1} \left(\frac{2c-4}{3c-2}\right)$\\
\hline
10 & 36 & 36.65\\
11 & 45 & 62.3\\
12 & 55 & 105.05 \\
13& 66 & 180.72
\end{tabular}

For the inductive step we will prove that $p(c)$ implies $p(c+4)$. 

$$\begin{array}{lcl} 
{{c+3}\choose{\lfloor\frac{c-2}{4}\rfloor}+1}&=&\frac{c(c+1)(c+2)(c+3)}{(c+2-\lfloor\frac{c-2}{4}\rfloor)(c+1-\lfloor\frac{c-2}{4}\rfloor)(c-\lfloor\frac{c-2}{4}\rfloor)(\lfloor\frac{c-2}{4}\rfloor+1)} {{c-1}\choose {\lfloor\frac{c-2}{4}\rfloor}}\\
& \leq & \frac{4^4}{3^3}\frac{3c(3c+3)}{(3c+13)(3c+5)}{{c-1}\choose {\lfloor\frac{c-2}{4}\rfloor}}\\
&\leq & \frac{4^4}{3^3}\frac{3c(3c+3)}{(3c+13)(3c+5)}\left(\frac{9}{7\sqrt{c}}\right)\left(\frac{4}{3^\frac{3}{4}}\right)^{c-1} \left(\frac{2c-4}{3c-2}\right) \\
&=&\left(\frac{4}{3^\frac{3}{4}}\right)^{c+3}\left(\frac{9}{7\sqrt{c}}\right)\frac{3c(3c+3)(2c-4)}{(3c+13)(3c+5)(3c-2)}
\end{array}$$

To complete the induction it is enough to prove that, for $c\geq 10$, $$\left(\frac{1}{\sqrt{c}}\right)\frac{3c(3c+3)(2c-4)}{(3c+13)(3c+5)(3c-2)}\leq \left(\frac{1}{\sqrt{c+4}}\right) \left(\frac{2c+4}{3c+10}\right).  $$ 
This is a consequence of the fact that the real function $$f(c)= \left(\frac{1}{\sqrt{c}}\right)\frac{3c(3c+3)(2c-4)}{(3c+13)(3c+5)(3c-2)}-\left(\frac{1}{\sqrt{c+4}}\right) \left(\frac{2c+4}{3c+10}\right)$$ has no roots for positive $c\geq 10$  and $f(10)<0$, which can be proved by computer.

Therefore, $p(c+4)$ holds. By \ref{sinomega} and \ref{bound}, the result follows.\end{proof}

Notice that if $G_{r,c}$ is regular, $\alpha = \beta = 1$ and since $\delta (G_{r,c}) = \frac{r(c-1)}{2},$ we have the following corollary.

\begin{corollary}\label{regular}
 Let $G_{r, c}$ be a regular balanced  $c$-partite tournament, with $c\geq 10$.
 $G_{r, c}$ has a strong partition if

$$\frac{1}{r}\geq\sqrt{c}\left(\frac{18}{7}\right)\left(\frac{2}{3^\frac{3}{4}}\right)^{c-1} . $$

\end{corollary}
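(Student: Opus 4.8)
The plan is to obtain Corollary~\ref{regular} as the specialization of Corollary~\ref{unmalo} to the regular case. In a regular balanced $c$-partite tournament every vertex $x$ has $d^+(x)=d^-(x)=\frac{r(c-1)}{2}$, so $\Delta(G_{r,c})=\delta(G_{r,c})=\frac{r(c-1)}{2}$ and therefore $\alpha=\frac{2\Delta(G_{r,c})}{r(c-1)}=1=\frac{2\delta(G_{r,c})}{r(c-1)}=\beta$. Substituting $\alpha=\beta=1$ into the inequality of Corollary~\ref{unmalo}, the factor $\left(\frac{\alpha}{\beta}\right)^{\lfloor\frac{c-2}{4}\rfloor}$ equals $1$ and $\alpha^{c-1}+\beta^{c-1}=2$, so that sufficient condition becomes
\[
\frac{1}{r}\ \geq\ \sqrt{c}\left(\frac{9}{7}\right)\left(\frac{2}{3^{\frac{3}{4}}}\right)^{c-1}\cdot 2\ =\ \sqrt{c}\left(\frac{18}{7}\right)\left(\frac{2}{3^{\frac{3}{4}}}\right)^{c-1},
\]
which is precisely the hypothesis of Corollary~\ref{regular}.

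It then remains to check that the degree hypothesis of Corollary~\ref{unmalo}, namely $\delta(G_{r,c})\geq\max\{r(c-1)\frac{c+6}{4(c+1)},\ \lfloor\frac{c-2}{4}\rfloor(r+\mu)+\max\{\delta_V^+,\delta_V^-\}\}$, is automatic in the regular case. For the first entry of the maximum, $\frac{r(c-1)}{2}\geq r(c-1)\frac{c+6}{4(c+1)}$ is equivalent to $2(c+1)\geq c+6$, i.e.\ $c\geq 4$, hence it holds since $c\geq 10$. For the second entry, since every vertex distributes its out-degree (resp.\ its in-degree) $\frac{r(c-1)}{2}$ over the remaining $c-1$ parts, averaging gives $\delta_V^+\leq\frac{r}{2}$ and $\delta_V^-\leq\frac{r}{2}$, while $\Delta_V^+\leq r$ and $\Delta_V^-\leq r$ force $\mu\leq r$; hence $\lfloor\frac{c-2}{4}\rfloor(r+\mu)+\max\{\delta_V^+,\delta_V^-\}\leq 2r\lfloor\frac{c-2}{4}\rfloor+\frac{r}{2}\leq\frac{r(c-2)}{2}+\frac{r}{2}=\frac{r(c-1)}{2}=\delta(G_{r,c})$.

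With both hypotheses of Corollary~\ref{unmalo} verified, its conclusion that $G_{r,c}$ admits a strong partition follows at once, which is exactly the assertion of Corollary~\ref{regular}. I do not expect a genuine obstacle: all the real work is already carried out inside Corollary~\ref{unmalo} (and, behind it, by the Main Theorem and the four preceding lemmas), so here the only point needing a moment's thought is the estimate of the second entry of the degree condition, the short computation indicated above; one should also take care that the numerical constants are tracked correctly through the substitution $\alpha=\beta=1$.
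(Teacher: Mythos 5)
Your proof is correct and follows essentially the same route as the paper, which obtains Corollary~\ref{regular} by substituting $\alpha=\beta=1$ and $\delta(G_{r,c})=\frac{r(c-1)}{2}$ into Corollary~\ref{unmalo}. Your explicit verification that the minimum-degree condition is automatic in the regular case (via $\mu\leq r$, $\max\{\delta_V^+,\delta_V^-\}\leq \frac{r}{2}$, and $c\geq 4$) is a welcome detail that the paper leaves implicit.
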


Let $C(r)$ be the number of partite sets such that if $c\geq C(r)$, every  balanced  multipartite tournament $G_{c,r}$ satisfying the minimum degree condition of the Main Theorem has a  strong partition. By the Main Theorem, $C(r)$ is bounded for each non negative integer $r$. In the following table we illustrate the upper bounds of $C(r)$ for regular balanced  multipartite tournament given by Corollary \ref{regular}.

\begin{center}
\begin{tabular}{c|ccccc}
$r$ &2 & 3&5&10&100 \\
\hline
$C(r) \leq $& 26&30&35&40&60

\end{tabular}
\end{center}

\section{Proofs of the lemmas}\label{pruebas lemas}

\noindent {\bf Proof of Lemma \ref{particion}. }  

Let $G_{r,c}$ be a balanced $c$- partite tournament and let $V_1, \ldots V_c$ its partite sets. Let $V_1 = \{x_1, \dots, x_r\}$ and,  given a partition of $G_{r,c}$ in maximal tournaments, let $\tau_1, \tau_2, \ldots \tau_r$ be its set of tournaments.  W.l.o.g, assume that for every partition of $G_{r,c}$,  the vertex $x_i$  of $V_1$ is a vertex of  $\tau_i$. Then, every partition corresponds to  a permutation of the vertices in each $V_j$ {(for $2\leq j \leq c$)} choosing the $i$-th member of the permutation of $V_j$ to be in $\tau_i$. Since there are $r!$ permutations of each $V_j$, $j\in \{2, 3, \ldots, c \}$, the result follows. 
\hfill${\ \rule{0.5em}{0.5em}}$\\

\noindent\textbf{Proof of Lemma \ref{L M(gi)}}	
	
Let $x\in V(G_{r,c})$. A maximal tournament containing the vertex $x$ with out-degree $k$ can be constructed choosing a vertex for each part $V_i$ for $1\le i\le c-1$ in the following way. Given $\textbf{h}=(h_1,h_2,\dots,h_{c-1})\in \mathcal{H}_k(x)$, we choose an out neighbor of $x$ from $V_i$ if and only if $h_i=1$. The number of maximal tournaments constructed in this way for a fixed $\textbf{h}$ is $\prod_{i=1}^{c-1}d_i^+(x)^{h_i}d_i^-(x)^{1-h_i} $. 
Therefore, for each $\textbf{h}\in \mathcal{H}_k(x)$, there are $d_i^+(x)^{h_i}d_i^-(x)^{1-h_i}$ ways to construct such a maximal {tournaments} and the result follows. \hfill${\ \rule{0.5em}{0.5em}}$\\

\noindent {\bf Proof of Lemma \ref{promedio}. } 
	
Let $x \in V(G_{r,c})$. Assume w.l.o.g that  $x\in V_c$, let $M(d_1^+(x),d_2^+(x),\dots,d_{c-1}^+(x);k)$ be the function that calculates the number of maximal {tournaments} for which $x$ has out-degree $k$. Then by Lemma \ref{L M(gi)},  $$M(d_1^+(x),d_2^+(x),\dots,d_{c-1}^+(x);k)=
\sum_{h\in \mathcal{H}_k(x)}\prod_{i=1}^{c-1}
d_i^+(x)^{h_i}d_i^-(x)^{1-h_i}$$
and 
$$T^+(x) = \sum\limits_{k=0}^{\lfloor\frac{c-2}{4}\rfloor} M(d_1^+(x), d_2^+(x) \ldots d_{c-1}^+(x);k).$$

To give an upper bound of $T^+(x)$ we need to extend the definition of  $M(d_1^+(x),d_2^+ (x)\dots , d_{c-1}^+(x); k)$  to the real numbers, as follows:  

Let $g_1,g_2, \ldots,g_{s}$ be real numbers such that $0 \le g_i \leq r$, 
we define
 $$M(g_1, \dots , g_s; k)=\sum_{h\in \mathcal{H}_{k,s}}\prod_{i=1}^{s} g_i^{h_i}(r-g_i)^{1-h_i}, $$
 
where $\mathcal{H}_{k,s}$ is the set of $s$-vectors  $(h_1,h_2, \ldots,h_s)\in \{0,1\}^s$ such that  if $g_i=r$, $h_i=1$;  if $g_i=0$,  $h_i=0$, and  $\sum_{i=1}^sh_i=k$. For the sake of  readability, in what follows   $M(g_1, \dots , g_s; k)$ can be denoted as $M(g_{[s]}; k).$ In order to prove the Lemma \ref{promedio}, we will prove the following general version:
 
\begin{proposition}\label{general} Let $g_1, g_2, \dots , g_{c-2}, g_{c-1},q$ be real numbers such that  $0\leq g_i \leq r$. Let $p_r=|\{i\in \{1,\ldots,c-3\}: g_i=r  \}|$,  $p_0=|\{i\in \{1,\ldots,c-3\}: g_i=0  \}|$ and $t=c-3-p_r-p_0$ and $p_r+1\leq q.$
 Let $\Gamma= max\{ g_i\}_{i\in[c-1]}$ and $\gamma = min \{g_i\}_{i\in[c-1]}$. 
If $\sum\limits_{i\in [c-1]} g_i \geq q(r+\Gamma-\gamma) + \gamma $, then $$\sum\limits_{k=0}^{q} M(g_{[c-1]}; k)  \leq \sum\limits_{k=0}^{q} {{c-1}\choose {k}} (\epsilon)^k (r-\epsilon)^{c-1-k}$$ for  $\epsilon =  \sum\limits_{i\in [c-1]} \frac{g_i}{c-1}$.\end{proposition}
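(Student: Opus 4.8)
The plan is to prove the inequality by a smoothing (or ``compression'') argument on the vector $(g_1,\dots,g_{c-1})$: show that among all real vectors with the prescribed sum $S:=\sum_{i\in[c-1]}g_i$ and with each coordinate in $[0,r]$, the quantity $\sum_{k=0}^{q}M(g_{[c-1]};k)$ is maximized when all coordinates are equal to $\epsilon=S/(c-1)$, in which case $M(g_{[c-1]};k)=\binom{c-1}{k}\epsilon^k(r-\epsilon)^{c-1-k}$ and the right-hand side is exactly what we want. First I would record the basic identity that $M(g_{[c-1]};k)$ is, up to the combinatorial constraints encoded by $\mathcal H_{k,c-1}$, the degree-$k$ elementary-symmetric-type expression: writing $M$ as a sum over $k$-subsets $I$ of $\prod_{i\in I}g_i\prod_{i\notin I}(r-g_i)$, one sees that $\sum_{k=0}^{c-1}M(g_{[c-1]};k)=\prod_i\big(g_i+(r-g_i)\big)=r^{c-1}$, and more usefully that for a fixed pair of indices $\{a,b\}$, each $M(g_{[c-1]};k)$ is \emph{affine} in the pair $(g_a,g_b)$ once we fix $g_a+g_b$; indeed $g_ag_b$ and $g_a(r-g_b)+g_b(r-g_a)$ and $(r-g_a)(r-g_b)$ are all affine in $g_ag_b$, hence (for fixed sum $g_a+g_b$) convex or concave as functions of, say, $g_a$. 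This is where the role of $\Gamma,\gamma$ and the hypothesis $S\ge q(r+\Gamma-\gamma)+\gamma$ enters: that bound on $S$ is exactly what guarantees that when we average two coordinates $g_a,g_b$ towards their mean, we stay in a regime where the truncated sum $\sum_{k\le q}M$ does not decrease.

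Concretely, the key step is a two-point inequality: if $g_a\le g_b$ and we replace them by $g_a'=g_a+\eta$, $g_b'=g_b-\eta$ for small $\eta>0$ (keeping both in $[0,r]$), then $\sum_{k=0}^{q}M$ weakly increases, provided the remaining coordinates and the total sum satisfy the hypothesis. To see this I would fix all other coordinates and analyze $\phi(g_a):=\sum_{k=0}^{q}M(g_{[c-1]};k)$ as a function of $g_a$ with $g_a+g_b$ held constant; expanding $M$ by conditioning on whether $a\in I$, $b\in I$, exactly one of them, or neither, one gets $\phi(g_a)=A+B\,g_ag_b$ where $A,B$ depend only on the other coordinates and on $k\le q$, with $B$ a signed sum of $M$-values of the $(c-3)$-coordinate subvector; the hypothesis on $S$ (which forces the ``remaining budget'' $\sum_{i\ne a,b}g_i$ to be large, hence forces many of the other coordinates to be near $r$, cf.\ the $p_r+1\le q$ condition) is used to show the relevant coefficient has the right sign so that $\phi$ is minimized at the endpoints and hence increasing as $g_a\to g_b$. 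Iterating this smoothing step — always picking the current max and min coordinates and pushing them together — the vector converges to the constant vector $(\epsilon,\dots,\epsilon)$, and by continuity of $\sum_{k\le q}M$ the inequality passes to the limit.

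The main obstacle I anticipate is the bookkeeping of the boundary constraints: the $h_i$ are \emph{forced} to $1$ when $g_i=r$ and to $0$ when $g_i=0$, so the index set $\mathcal H_{k,c-1}$ changes discontinuously as a coordinate hits $0$ or $r$, and the smoothing step must be carried out without creating or destroying such saturated coordinates, or else handled by a separate argument when a coordinate is pushed to an endpoint. This is presumably why the statement isolates $p_r,p_0,t$ and assumes $p_r+1\le q$: one first disposes of the $p_0$ coordinates equal to $0$ (they contribute a harmless factor $r^{p_0}$ and drop the effective dimension) and the $p_r$ coordinates equal to $r$ (each forces one unit into the out-degree count, so $\sum_{k\le q}M$ over $c-1$ coordinates with $p_r$ of them equal to $r$ reduces to $\sum_{k\le q-p_r}M$ over the $t+2$ free coordinates), and only then runs the smoothing argument on the genuinely free coordinates, where no coordinate is saturated and the affine/convexity analysis above applies cleanly. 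Once the reduction and the two-point inequality are in place, specializing $q=\lfloor\frac{c-2}{4}\rfloor$, $g_i=d_i^+(x)$, $r-g_i=d_i^-(x)$, $\Gamma-\gamma\le\mu$, and using $\max\{\delta_V^+,\delta_V^-\}\ge\gamma$ recovers Lemma~\ref{promedio}, including the ``moreover'' clause (if more than $q$ of the $d_i^+(x)$ equal $r$ then every maximal tournament gives $x$ out-degree $>q$, so $T^+(x)=0$).
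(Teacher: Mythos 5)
Your overall strategy coincides with the paper's: smooth the vector $(g_1,\dots,g_{c-1})$ by repeatedly replacing the current maximum and minimum coordinates by their common average, note that for a fixed pair sum the truncated quantity $\sum_{k\le q}M$ is affine in the product $g_ag_b$ (with coefficient $M(g_{[c-3]};q)-M(g_{[c-3]};q-1)$ after the telescoping), and pass to the limit at the constant vector $(\epsilon,\dots,\epsilon)$, where $M$ becomes the binomial expression. You also correctly identify that everything hinges on the sign of that coefficient. One side remark: the ``discontinuity'' you worry about when a coordinate hits $0$ or $r$ is not actually there, because the terms excluded from $\mathcal{H}_{k,s}$ by the saturation constraints vanish identically ($g_i^{h_i}(r-g_i)^{1-h_i}=0$ when $g_i=r$, $h_i=0$, or when $g_i=0$, $h_i=1$), so $M$ is simply a polynomial in the $g_i$ and no separate boundary treatment or preliminary reduction modulo $p_0$ and $p_r$ is required.

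The genuine gap is that you never prove the sign condition $M(g_{[c-3]};q-1)\le M(g_{[c-3]};q)$; you only assert that the hypothesis $\sum_i g_i\ge q(r+\Gamma-\gamma)+\gamma$ ``is used to show the relevant coefficient has the right sign,'' supported by the heuristic that a large sum forces many coordinates to be near $r$. That heuristic is not the operative mechanism (indeed $p_r+1\le q$ bounds the number of coordinates equal to $r$ from above, not below), and it does not convert into a proof. The actual argument, which is the technical core of the paper's proof, has two pieces. First (Claim~\ref{afirmacion}): the sum hypothesis implies that for \emph{every} subset $I$ of the non-saturated indices with $|I|=t-(q-p_r)+1$ one has $\sum_{i\in I}g_i/(r-g_i)\ge q-p_r$. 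Second (Claim 2.1): for each $\mathbf{h}\in\mathcal{H}^{c-3}_{q-1}$, summing the terms of $M(\cdot;q)$ indexed by the vectors $\mathbf{h}'\ge\mathbf{h}$ obtained by flipping one additional free coordinate from $0$ to $1$ multiplies the $\mathbf{h}$-term by exactly $\sum_j g_j/(r-g_j)$ over the flippable positions, which by the first piece is at least $q-p_r$; since each $\mathbf{h}'\in\mathcal{H}^{c-3}_q$ arises from exactly $q-p_r$ such $\mathbf{h}$, double counting yields $M(\cdot;q)\ge M(\cdot;q-1)$. Without this counting argument your two-point inequality is unsupported, so the proof as written does not go through; you would need to supply it (or an equivalent) to close the argument.
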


Assume that $g_1,g_2, \ldots, g_{c-1}$ are ordered in the following way: 
\begin{description}
	\item -   $g_{c-1}= \Gamma$, $g_{c-2}= \gamma$,
	\item - for every  $i \in [t]$, $0< g_i < r$, 
	\item - for every  $t+1 \leq i \leq t+p_r$, $g_i = r$, 
	\item - for every $t+p_r + 1\leq i \leq  t+p_r + p_0 = c-3$, $g_i = 0$.
\end{description}

\begin{claim}\label{afirmacion}
	For every $I\subseteq [t]$ with $|I| = t-(q-p_r) +1$,  $\sum\limits_{i\in I} \frac{g_i}{r-g_i}\geq (q-p_r)$. \end{claim}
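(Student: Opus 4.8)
The goal is to prove Claim~\ref{afirmacion}: under the ordering fixed above and the hypothesis $\sum_{i\in[c-1]}g_i\ge q(r+\Gamma-\gamma)+\gamma$, every $I\subseteq[t]$ with $|I|=t-(q-p_r)+1$ satisfies $\sum_{i\in I}\frac{g_i}{r-g_i}\ge q-p_r$.

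\textbf{The plan.} First I would translate the degree-sum hypothesis into information about the ``fractional'' part of the sum, i.e.\ about $\sum_{i\in[t]}g_i$ alone. Since $g_{c-1}=\Gamma$, $g_{c-2}=\gamma$, the $p_r$ coordinates equal to $r$ contribute $p_r r$, and the $p_0$ coordinates equal to $0$ contribute nothing, the hypothesis becomes
\[
\sum_{i\in[t]}g_i \;\ge\; q(r+\Gamma-\gamma)+\gamma-\Gamma-\gamma-p_r r \;=\; (q-p_r)r+(q-1)(\Gamma-\gamma)-\gamma.
\]
The crude bound $(q-1)(\Gamma-\gamma)-\gamma\ge -\gamma\ge -r$ (or better, using $\Gamma\ge\gamma$ and $\gamma\ge 0$) is too weak; instead I want to show $\sum_{i\in[t]}g_i\ge (q-p_r)r$, which will follow once one checks $(q-1)(\Gamma-\gamma)\ge\gamma$, i.e.\ $q\Gamma\ge (q-1)\gamma+\Gamma$ — but this need not hold in general, so the honest route is: the complementary set $[t]\setminus I$ has size $(q-p_r)-1$, each $g_i< r$ there, so $\sum_{i\in[t]\setminus I}g_i<((q-p_r)-1)r$; combined with the lower bound on $\sum_{i\in[t]}g_i$ this gives $\sum_{i\in I}g_i> \sum_{i\in[t]}g_i-((q-p_r)-1)r$. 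So the real content is to get $\sum_{i\in I}g_i$ large enough.

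\textbf{Key steps.} (1) Establish $\sum_{i\in[t]}g_i\ge (q-p_r)r+(q-1)(\Gamma-\gamma)-\gamma$ as above. (2) For the given $I$, bound $\sum_{i\in[t]\setminus I}g_i\le\bigl((q-p_r)-1\bigr)r$ — actually with strict inequality since each such $g_i<r$ and $|[t]\setminus I|=(q-p_r)-1$, unless that set is empty, in which case handle $q-p_r\le 1$ separately (then the claim reads $\sum_{i\in I}\frac{g_i}{r-g_i}\ge q-p_r\le 1$, and one must argue $\sum_{i\in I}g_i>0$, equivalently some $g_i>0$ on $I$; here the degree-sum hypothesis together with $|I|=t$ forces this unless $t=0$, which is itself a boundary case where $q=p_r$). (3) Subtract to get a lower bound on $\sum_{i\in I}g_i$, and now feed this into the function $x\mapsto \frac{x}{r-x}$, which is convex and increasing on $[0,r)$. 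Convexity gives $\sum_{i\in I}\frac{g_i}{r-g_i}\ge |I|\cdot\frac{\bar g}{r-\bar g}$ where $\bar g=\frac1{|I|}\sum_{i\in I}g_i$; then it suffices that $|I|\cdot\frac{\bar g}{r-\bar g}\ge q-p_r$, i.e.\ $\bar g\ge\frac{(q-p_r)r}{(q-p_r)+|I|}$. Since $|I|=t-(q-p_r)+1$, we have $(q-p_r)+|I|=t+1$, so the target is $\sum_{i\in I}g_i\ge |I|\cdot\frac{(q-p_r)r}{t+1}$. Compare with the lower bound from step~(3): $\sum_{i\in I}g_i\ge \sum_{i\in[t]}g_i-((q-p_r)-1)r\ge r + (q-1)(\Gamma-\gamma)-\gamma$; checking that $r+(q-1)(\Gamma-\gamma)-\gamma\ge \frac{(t-(q-p_r)+1)(q-p_r)r}{t+1}$ is the remaining inequality, and here $\gamma\le\Gamma$ and $\gamma\le r$ and $q-p_r\le t+1$ should suffice (the right side is at most $\frac{(t+1)^2 r}{4(t+1)}=\frac{(t+1)r}{4}$ by AM–GM, but we need something tighter — the natural move is to use $q-p_r\le t$ whenever $I\neq[t]$, pushing the right-hand side down).

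\textbf{Main obstacle.} The delicate point is the boundary behaviour: when $q-p_r\le 1$, or when $t$ is small relative to $q-p_r$, the convexity estimate and the ``drop $((q-p_r)-1)$ coordinates of size $<r$'' estimate both degenerate, and one must verify the claim by hand in those regimes, leaning directly on $\sum_{i\in[c-1]}g_i\ge q(r+\Gamma-\gamma)+\gamma$ and on $g_{c-1}=\Gamma\le r$. I would organize the proof as: reduce to $I\subseteq[t]$, dispose of $|I|=t$ (equivalently $q-p_r=1$) first, then for $q-p_r\ge 2$ run the convexity argument with the algebraic inequality above. The rest — verifying $f$ has no positive root and $f(10)<0$ style computations that appear elsewhere — does not arise here; the work is entirely the elementary inequality manipulation just outlined.
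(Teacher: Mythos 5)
There is a genuine gap at your step (2). Bounding each $g_i$ with $i\in[t]\setminus I$ by $r$ is too lossy: it yields only $\sum_{i\in I}g_i > r+(q-1)(\Gamma-\gamma)-\gamma$, and the inequality you then need, $r+(q-1)(\Gamma-\gamma)-\gamma\ \ge\ \frac{(t-(q-p_r)+1)(q-p_r)r}{t+1}$, is false in general. Take the extremal configuration $p_r=p_0=0$, $t=c-3$, all $g_i$ equal to $\gamma=\Gamma=\frac{qr}{c-2}$, so that the hypothesis $\sum_{i\in[c-1]}g_i\ge qr+\gamma$ holds with equality; concretely $c=14$, $q=3$, $r=10$, $g_i=2.5$. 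Your lower bound on $\sum_{i\in I}g_i$ is $r-\gamma=7.5$, while the convexity target $\frac{|I|(q-p_r)r}{t+1}$ equals $22.5$. The claim itself holds with equality here ($\sum_{i\in I}\frac{g_i}{r-g_i}=9\cdot\frac{2.5}{7.5}=3=q-p_r$), so there is no slack to recover by separating boundary cases: the intermediate estimate must be essentially tight, and yours is off by a factor of $q$.

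The paper repairs exactly this point. Instead of $g_i<r$ on the complement, it bounds each $g_i$, $i\in[t]\setminus I$, by $Q=\max_{i\in[t]\setminus I}g_i$, using $Q\le\Gamma$ when $\Gamma<r$ (then $p_r=0$) and $Q\le r-1$ when $\Gamma=r$; the term $q(\Gamma-\gamma)$ built into the hypothesis then exactly absorbs the $q-p_r-1$ removed coordinates, giving $\sum_{i\in I}g_i\ge q(r-\gamma)$. After that no convexity is needed: from $\frac{g_i}{r-g_i}\ge\frac{g_i}{r-\gamma}$ one gets $\sum_{i\in I}\frac{g_i}{r-g_i}\ge\frac{q(r-\gamma)}{r-\gamma}=q\ge q-p_r$. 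Your overall architecture (isolate $\sum_{i\in I}g_i$, then convert to the weighted sum) matches the paper's, but the crude complement estimate in the middle is the step that fails and cannot be patched downstream.
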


$$\centering \begin{array}{lcl}
\sum\limits_{i\in [c-1]} g_i  & = &\sum\limits_{i\in I} g_i +  \sum\limits_{i\in [t]\setminus I} g_i +  \sum\limits_{i= t+1}^{t+p_r} g_i +  \sum\limits_{i = t+ p_r +1}^{c-3} g_i + (g_{c-2} + g_{c-1})\\
&=& \sum\limits_{i\in I} g_i +  \sum\limits_{i\in [t]\setminus I }g_i +  rp_r + (g_{c-2} + g_{c-1}),
\end{array} $$ then, 
$$\centering \begin{array}{lcl}
\sum\limits_{i\in I} g_i &=&  \sum\limits_{i\in [c-1]} g_i  -   \sum\limits_{i\in [t]\setminus I }g_i - rp_r - (g_{c-2} + g_{c-1}).\end{array}$$ 
  
Let  $Q\geq g_i $ for $i\in [t]\setminus I$. Since $\sum\limits_{i\in [c-1]} g_i \geq q(\Gamma+ r-\gamma) + \gamma$, 

$$\begin{array}{lcl}
 \sum\limits_{i\in I} g_i  &\geq &q(\Gamma+ r-\gamma) + \gamma -   Q(q-p_r-1) - rp_r - \Gamma -  \gamma \\ 
 & = &  q(r-\gamma) + (q-1)(\Gamma -Q) + p_r (Q -r).
\end{array}$$ 
If $\Gamma < r$, then  $p_r = 0$, $Q\leq \Gamma$ and therefore, $\sum\limits_{i\in I} g_i  \geq q(r-\gamma)$. 
 If $\Gamma = r$,  since $Q\leq r-1$ and $|I| = t-(q-p_r) +1$,  $\sum\limits_{i\in I} g_i  \geq q(r-\gamma) + (q-1-p_r)(r-Q) \geq  q(r-\gamma).$ 
In both cases, $\sum\limits_{i\in I} g_i  \geq q(r-\gamma)$ and since for every $i \in [c-1]$, $g_i \geq \gamma$, $$\sum\limits_{i\in I} \frac{g_i}{r-g_i} \geq  \sum\limits_{i\in I} \frac{g_i}{r-\gamma}\geq \frac{q(r-\gamma)}{(r-\gamma)}\geq q$$ and the claim follows.
 
 \begin{claim}\label{supremo}
For every $I\subseteq [t]$  such that  $|I| = t-(q-p_r) +1$,  we have that
 $$\sum\limits_{k=0}^{q} M(g_{[c-1]}; k)  \leq \sum\limits_{k=0}^{q} M(g'_{[c-1]}; k) $$ where 
 $g'_{c-2} = g'_{c-1} =  \frac{g_{c-2} + g_{c-1}}{2}$; and for $i\in [c-3]$,  $g'_i = g_i$; 
  \end{claim}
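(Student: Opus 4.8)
The plan is to isolate the two coordinates being modified. Fix the values $g_1,\dots,g_{c-3}$, write $a:=g_{c-2}=\gamma$ and $b:=g_{c-1}=\Gamma$, and set $N(k):=M(g_{[c-3]};k)$ with the convention $N(k)=0$ for $k<0$. Grouping the defining sum for $M(g_{[c-1]};k)$ according to how many of the indices $c-2,c-1$ receive value $1$ (zero, one, or both), and observing that any choice violating a $0/r$ constraint contributes weight $0$, one obtains
$$M(g_{[c-1]};k)=(r-a)(r-b)\,N(k)+\bigl(a(r-b)+b(r-a)\bigr)N(k-1)+ab\,N(k-2)$$
for every $k$, and the same identity for $g'$ with $a,b$ replaced by $a'=b'=\tfrac{a+b}{2}$.

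Subtracting the two identities, the coefficients of $N(k)$ and of $N(k-2)$ each increase by $(a')^2-ab=\tfrac{(a-b)^2}{4}$, while the coefficient of $N(k-1)$ changes by $2a'(r-a')-\bigl(a(r-b)+b(r-a)\bigr)=-\tfrac{(a-b)^2}{2}$, so
$$M(g'_{[c-1]};k)-M(g_{[c-1]};k)=\tfrac{(a-b)^2}{4}\bigl(N(k)-2N(k-1)+N(k-2)\bigr).$$
Summing over $k=0,\dots,q$ makes the right-hand side telescope to $\tfrac{(a-b)^2}{4}\bigl(N(q)-N(q-1)\bigr)$ (write the bracket as $D(k)-D(k-1)$ with $D(k)=N(k)-N(k-1)$ and use $D(-1)=0$). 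If $a=b$ then $g'=g$ and there is nothing to prove; otherwise the whole claim reduces to the single inequality $M(g_{[c-3]};q)\ge M(g_{[c-3]};q-1)$.

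To establish this, split $\{1,\dots,c-3\}$ into the $p_r$ indices with $g_i=r$ (forced to $1$), the $p_0$ indices with $g_i=0$ (forced to $0$), and the $t$ \emph{free} ones with $0<g_i<r$. Factoring out the forced coordinates gives $M(g_{[c-3]};k)=r^{p_r+p_0}\bigl(\prod_{i\text{ free}}(r-g_i)\bigr)\,e_{k-p_r}(\xi)$, where $\xi_i:=g_i/(r-g_i)$ runs over the free indices and $e_j$ is the $j$-th elementary symmetric polynomial in these $t$ nonnegative numbers ($e_j=0$ for $j<0$ or $j>t$). Since the prefactor is positive, it suffices to show $e_m(\xi)\ge e_{m-1}(\xi)$ for $m:=q-p_r\ge1$. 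For $m>t+1$ both sides vanish. For $1\le m\le t$ I would use the identity $m\,e_m=\sum_{|S|=m-1}\bigl(\prod_{i\in S}\xi_i\bigr)\bigl(\sum_{j\in[t]\setminus S}\xi_j\bigr)$: each complement $[t]\setminus S$ has size exactly $t-(q-p_r)+1$, so Claim \ref{afirmacion} gives $\sum_{j\in[t]\setminus S}\xi_j\ge q-p_r=m$, and substituting yields $m\,e_m\ge m\,e_{m-1}$. Finally, the borderline value $m=t+1$ cannot occur when $\gamma\neq\Gamma$: only $t=m-1$ free coordinates are available, each below $r$, so a short computation (using $0\le\gamma\le\Gamma\le r$) shows $\sum_{i\in[c-1]}g_i$ falls below the threshold $q(r+\Gamma-\gamma)+\gamma$ demanded by the hypothesis of Proposition \ref{general}.

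The heart of the matter is precisely that last link, $e_m(\xi)\ge e_{m-1}(\xi)$: the preceding steps are algebraic identities that collapse cleanly, whereas this is a genuine inequality about where the mass of the $\xi_i$ concentrates, and it is exactly the statement that Claim \ref{afirmacion} — the point at which the lower bound on $\sum g_i$ is actually spent — was engineered to supply. The only other delicate point is the bookkeeping with coordinates pinned at $0$ or $r$ (and the resulting borderline case $m=t+1$), which is why the hypotheses are organized around $p_r,p_0,t$ rather than the bare tuple $(g_1,\dots,g_{c-1})$.
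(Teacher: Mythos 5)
Your proposal is correct and follows essentially the same route as the paper: the same splitting off of the coordinates $c-2,c-1$, the same telescoping reduction of the whole claim to the single monotonicity inequality $M(g_{[c-3]};q)\ge M(g_{[c-3]};q-1)$ (your $\frac{(a-b)^2}{4}$ is exactly the paper's $g'_{c-2}g'_{c-1}-g_{c-2}g_{c-1}$), and your identity $m\,e_m=\sum_{|S|=m-1}\bigl(\prod_{i\in S}\xi_i\bigr)\bigl(\sum_{j\notin S}\xi_j\bigr)$ is precisely the paper's double count over the sets $F(\mathbf{h})$, with Claim \ref{afirmacion} spent at the same point. Your explicit exclusion of the borderline case $m=t+1$ is a small point of care that the paper's version of the argument passes over silently.
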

Observe that for every  $k \geq 0$,
$$M(g_{[c-1]}; k) =\sum_{j=0}^2 M(g_{[c-3]}; k-j)M(g_{c-2}, g_{c-1}; j).$$  
Therefore,  for every  $q\geq p_r+1$, 
$$\begin{array}{lcl}
\sum\limits_{k=0}^{q} M(g_{[c-1]}; k) &=& M(g_{[c-3]}; q)M(g_{c-2}, g_{c-1}; 0)\\
& + &M(g_{[c-3]}; q-1)\left[M(g_{c-2}, g_{c-1}; 0)+M(g_{c-2}, g_{c-1}; 1)\right]  \\
 &+& \sum\limits_{k=0}^{q-2} M(g_{[c-3]}; k)\left[M(g_{c-2}, g_{c-1}; 0)+M(g_{c-2}, g_{c-1}; 1)+M(g_{c-2}, g_{c-1}; 2)\right].
\end{array}$$
Notice that  for every pair $x, y \in \mathbb{R}$, such that $0\leq x, y\leq r$,   
\begin{enumerate}
	\item  $M(x, y; 0)+M(x, y; 1)+M(x, y; 2) = (r-x)(r-y) + x(r-y) + (r-x)y + xy= r^2$;  
	\item  $M(x, y; 0)+M(x, y; 1)= (r-x)(r-y) + x(r-y) + (r-x)y = r^2 - xy$ y
	\item  $M(x, y; 0)= r^2 - r(x+y) + xy$. 
\end{enumerate}

Since $g_{c-2}+ g_{c-1} = g'_{c-2}+ g'_{c-1}$, we have that
$$\begin{array}{lcl}
\sum\limits_{k=0}^{q} M(g'_{[c-1]}; k) - \sum\limits_{k=0}^{q} M(g_{[c-1]}; k)&= &M(g_{[c-3]}; q-1)\left[g_{c-2}g_{c-1}- g'_{c-2}g'_{c-1} \right] \\
&+&  M(g_{[c-3]}; q)\left[ g'_{c-2}g'_{c-1} - g_{c-2}g_{c-1} \right]\\
&=& \left( g'_{c-2}g'_{c-1} - g_{c-2}g_{c-1}\right)\left[M(g_{[c-3]}; q) - M(g_{[c-3]}; q-1) \right].
\end{array}  $$

Since  $g'_{c-2}g'_{c-1} \geq  g_{c-2}g_{c-1} $, $\label{cond}\sum\limits_{k=0}^{q} M(g_{[p+2]}; k)\leq \sum\limits_{k=0}^{q} M(g'_{[p+2]}; k)$   if and only if 
\begin{equation}\label{cond1}
M(g_{[c-3]}; q-1)\leq M(g_{[c-3]}; q).
\end{equation}

Let denote by $\mathcal{H}^{c-3}_{q-1} \subseteq \{0,1 \}^{c-3}$, the set of vectors ${\bf h}=(h_1,h_2,\dots,h_{c-3})$ such that  if $g_i=r$, $h_i=1$;  if $g_i=0$,  $h_i=0$, and $\sum_{i=1}^{c-3} h_i=q-1$ .

For each ${\bf h}=(h_1,\ldots. h_{c-3})\in \mathcal{H}^{c-3}_{q-1} $  let 
\begin{equation}\label{defF}
F({\bf h}) = \{(h'_1,\dots, h'_{c-3})\in \mathcal{H}^{c-3}_{q} :  h_i \leq  h'_i  \hbox{ for } i \in [c-3]\} \end{equation}
\noindent {\bf Claim 2.1}  For each ${\bf h}=(h_1,\ldots. h_{c-3})\in \mathcal{H}^{c-3}_{q-1}$.

$$\sum\limits_{\textbf{h'}\in F({\bf h})}   \prod\limits_{i=1}^{c-3} g_i^{h'_i} (r-g_i)^{1-h'_i} \geq (q-p_r) \prod\limits_{i=1}^{c-3} g_i^{h_i} (r-g_i)^{1-h_i}.$$

Let  $\textbf{h}=(h_1,\dots, h_{c-3})\in \mathcal{H}^{c-3}_{q-1}$ and recall that for every  $t+1 \leq j \leq t+p_r$, $g_j = r$;  and for every  $t+ p_r + 1\leq j \leq  t+p_r + p_0 = p$, $g_j = 0$. Therefore, for every  $t+1 \leq j \leq t+p_r$, $h_j = 1$  and for every  $t+ p_r + 1 \leq j \leq c-3$, $h_j = 0$.  By definition, $\sum\limits_{i = 1}^{c-3} h_i = q-1$, therefore w.o.l.g we may assume that  $j \in[q-1- p_r]$; $h_i = 1$ and that $q-p_r \leq j \leq  t$, $h_i = 0$. Hence,
\begin{equation}\label{unos}
(h'_1,\dots, h'_{c-3})\in F(\textbf{h}) \text{ if and only if }   h'_i = 1 \text{  for } i\in[q-1-p_r], \text{ and } \sum\limits_{j=q-p_r}^{t} h'_j =1.
\end{equation}

Therefore, by \ref{defF} and \ref{unos},

$$ \frac{\sum\limits_{\textbf{h'}\in F(\textbf{h})} \prod\limits_{i=1}^{c-3} g_i^{h'_i} (r-g_i)^{1-h'_i}}{  \prod\limits_{i=1}^{c-3} g_i^{h_i} (r-g_i)^{1-h_i}}= \sum\limits_{\textbf{h'}\in F(\textbf{h})} \frac{\prod\limits_{i=1}^{c-3} g_i^{h'_i} (r-g_i)^{1-h'_i}}{  \prod\limits_{i=1}^{c-3} g_i^{h_i} (r-g_i)^{1-h_i}}=\sum\limits_{j= q-p_r}^{t} \frac{g_j}{r-g_j}.$$

By the  hypothesis,  $\sum\limits_{j= q-p_r}^{t} \frac{g_j}{r-g_j}\geq (q-p_r)$, then 
$$  \frac{\sum\limits_{\textbf{h'}\in F(\textbf{h})} \prod\limits_{i=1}^{c-3} g_i^{h'_i} (r-g_i)^{1-h'_i}}{  \prod\limits_{i=1}^{p} g_i^{h_i} (r-g_i)^{1-h_i}}\geq  \left( q -p_r\right)$$
and the Claim 2.1 follows.

Observe that for every $\textbf{h'}\in \mathcal{H}^{c-3}_{q} $ there are exactly  $q-p_r$  elements $\textbf{h}\in \mathcal{H}^{c-3}_{q-1} $ such that, 
$\textbf{h'} \in F(\textbf{h})$. Therefore,
$$ \sum\limits_{\textbf{h}\in H^{c-3}_{q-1}} \left(\sum\limits_{\textbf{h'}\in F(\textbf{h})} \prod\limits_{i=1}^{c-3} g_i^{h'_i} (r-g_i)^{1-h'_i}\right) = (q-p_r) \sum\limits_{\textbf{h'}\in H^{c-3}_{q}}   \prod\limits_{i=1}^{c-3} g_i^{h'_i} (r-g_i)^{1-h'_i}. $$
On the other hand, by Claim 2.1,
 $$ \sum\limits_{\textbf{h}\in H^{c-3}_{q-1}}\left( \sum\limits_{\textbf{h'}\in F(\textbf{h})} \prod\limits_{i=1}^{c-3} g_i^{h'_i} (r-g_i)^{1-h'_i}  \right)\geq  \sum\limits_{\textbf{h}\in H^{c-3}_{q-1}}   \left( q - p_r \right)\prod\limits_{i=1}^{c-3} g_i^{h_i} (r-g_i)^{1-h_i}$$ implying that
$$\sum\limits_{\textbf{h'}\in H^{c-3}_q}  \prod\limits_{i=1}^{c-3} g_i^{h'_i} (r-g_i)^{1-h'_i}\geq  \sum\limits_{\textbf{h}\in H^{c-3}_{q-1}}  \prod\limits_{i=1}^{c-3} g_i^{h_i} (r-g_i)^{1-h_i}$$ which is equivalent to (\ref{cond1}). Therefore Claim 2 follows.\hfill${\ \rule{0.5em}{0.5em}}$\\

 By Claim \ref{afirmacion} and Claim \ref{supremo}  we can conclude that
 $$ \sum\limits_{k=0}^{q} M(g_{[c-1]}, k) \leq \sum\limits_{k=0}^{q} M(g'_{[c-1]}, k)$$  for 
 $g_i = g'_i$, with $i\in [c-3]$;  and  $g'_{c-2} = g'_{c-1} = \frac{g_{c-2} + g_{c-1}}{2}$.  
 
Let $\Gamma' = max\{ g'_i\}_{i\in [c-1]}$ and $\gamma'=  min \{g'_i\}_{i\in[c-1]}$. Since $g_{c-1} \geq g'_{c-1} = g'_{c-2} \geq g_{c-2}$, observe that  $\Gamma  \geq  \Gamma'$ $\gamma\leq \gamma'$.  Therefore $\sum\limits_{i\in [c-1]} g'_i \geq q(\Gamma+ r-\gamma) + \gamma$, as $\sum\limits_{i\in [c-1]} g'_i = \sum\limits_{i\in [c-1]} g_i \geq q(\Gamma+ r-\gamma) + \gamma $ and  $q\geq 1$.
  
We can iterate this process and find $g''_{1}, \dots , g''_{c-1}$ such that   $max\{ g''_i\}_{i\in[c-1]} = min\{ g''_i\}_{i\in[c-1]}$, then, for every $i,j\in [c-1]$,  $g''_i = g''_j$. Hence, for every  $i\in [c-1]$, $g''_i = \frac{\sum\limits_{i\in [c-1]} g''_i}{c-1} = \alpha$, and for each  $q\geq k \geq 0$, $$ M(g''_{[c-1]}; k) =  \sum\limits_{\textbf{h}\in \mathcal{H}^{c-1}_k}  \prod\limits_{i=1}^{c-1} (g''_i)^{h_i} (r-g''_i)^{1-h_i} =$$
 $$\sum\limits_{\textbf{h}\in \mathcal{H}^{c-1}_k}    \prod\limits_{i=1}^{c-1} \alpha^{h_i} (r-\epsilon)^{1-h_i} =\sum\limits_{\textbf{h}\in \mathcal{H}^{c-1}_k}    \alpha^k (r-\epsilon) ^{c-1-k}= {{c-1}\choose {k}}  \alpha^k (r-\epsilon) ^{c-1-k}$$ and the Proposition \ref{general} follows.

 Let $x\in V(G_{r,c})$ and $q= \lfloor\frac{c-2}{4}\rfloor$. Suppose, w.o.l.g., $x\in V_c$. Let  $\Gamma = \Delta^+_V(x) = d^+_{c-1}(x)$; $\gamma = \delta^+_V(x) = d^+_{c-2}(x)$; 
 $p_r=|\{i\in \{1,\ldots,c-3\}: d^+_i(x)=r  \}|$,  $p_0=|\{i\in \{1,\ldots,c-3\}: d^+_i(x)=0  \}|$ and $t=c-3-p_r-p_0$. Observe that  if $\Gamma < r$ then $p_r = 0$, and if $\Gamma = r$,  the vertex $x$ has  out-degree at least $p_r + 1$  in every maximal tournament. If $   \lfloor\frac{c-2}{4}\rfloor < p_r +1$, $T^+(x) = 0$. Thus, we can suppose that $q \geq p_r+1$.  Since $\mu \geq \Delta^+_V(x) - \delta^+_V(x)  = \Gamma - \gamma$ and $ max \{\delta^+_V, \delta^-_V\} \geq  \delta^+_V(x) = \gamma$,  $\sum\limits_{i\in [c-1]} d^+_i(x) \geq   \delta(G_{r,c}) \geq q(r+\Gamma-\gamma) + \gamma$.  By Proposition \ref{general}, $$T^+(x) = \sum\limits_{k=0}^{\lfloor\frac{c-2}{4}\rfloor} M(d^+(x)_{[c-1]}; k)  \leq \sum\limits_{k=0}^{\lfloor\frac{c-2}{4}\rfloor} {{c-1}\choose {k}} (\epsilon)^k (r-\epsilon)^{c-1-k}$$ for  $\epsilon =  \sum\limits_{i\in [c-1]} \frac{d_i^+(x)}{c-1} = \frac{d^+(x)}{c-1}$ and the lemma follows. 
 \hfill${\ \rule{0.5em}{0.5em}}$\\

\noindent {\bf Proof of Lemma \ref{suma}. }    Let $G_{r,c}$  be a balanced $c$-partite tournament such that  $\delta(G_{r,c}) \geq r(c-1)\frac{c+6}{4(c+1)}$, and let $x\in V(G_{r,c})$. In what follows, let $p = d^+(x)$ and $m=d^-(x)$.  Observe that since $\delta(G_{r,c}) \geq r(c-1)\frac{c+6}{4(c+1)}$ and $r(c-1) = p+m$,  $p \geq (p+m)\frac{c+6}{4(c+1)}$. Multiplying the {previous} inequality by $(c+1)$ and adding $-p\frac{c+6}{4}$ we obtain that $p\frac{3c-2}{4}\geq m\frac{c+6}{4}$. Since $\frac{c+6}{4}\geq \lfloor\frac{c-2}{4}\rfloor+2$ and $\frac{3c-2}{4}\leq c-1- \lfloor\frac{c-2}{4}\rfloor$ it follows that 

\begin{equation}\label{pvsm}
 p\left(c-1-\left\lfloor\frac{c-2}{4}\right\rfloor\right)  \geq  m \left(\left\lfloor\frac{c-2}{4}\right\rfloor+2\right)
\end{equation}

\noindent {\bf Claim.} $\sum\limits_{k=0}^{ \lfloor\frac{c-2}{4}\rfloor}{{c-1}\choose {k}} \left(\frac{p}{m}\right)^k  < {{c-1}\choose {\lfloor\frac{c-2}{4}\rfloor}} \left(\frac{p}{m}\right)^{\lfloor\frac{c-2}{4}\rfloor} \frac{p(c-1-\lfloor\frac{c-2}{4}\rfloor)}{p(c-1-\lfloor\frac{c-2}{4}\rfloor) - m(\lfloor\frac{c-2}{4}\rfloor+1)}$

  For each integer  $q\geq 0$, let $g(q) = \sum\limits_{k=0}^q {{c-1}\choose {k}} \left(\frac{p}{m}\right)^k$. 
Observe that,
 $$ \begin{array}{lcl} g(q+1) = 1 +  \sum\limits_{k=1}^{q+1} {{c-1}\choose {k}} \left(\frac{p}{m}\right)^k 
&=& 1 +  \sum\limits_{k=0}^{q} {{c-1}\choose {k+1}} \left(\frac{p}{m}\right)^{k+1} \\
& = & 1 + \frac{p}{m}  \sum\limits_{k=0}^{q} {{c-1}\choose {k+1}} \left(\frac{p}{m}\right)^{k} \\
& = & 1 + \frac{p}{m}  \sum\limits_{k=0}^{q} {{c-1}\choose {k}}\frac{c-1-k}{k+1} \left(\frac{p}{m}\right)^{k} \\
\end{array}$$
 Notice that for each $k\leq q$, $\frac{c-1-q}{q+1}\leq \frac{c-1-k}{k+1}$ therefore

 $$ \begin{array}{lcl}
g(q+1) &\geq& 1 + \frac{p}{m}  \sum\limits_{k=0}^{q} {{c-1}\choose {k}}\frac{c-1-q}{q+1} \left(\frac{p}{m}\right)^{k} \\ 
&=&  1+ \frac{p}{m}\frac{c-1-q}{q+1}  \sum\limits_{k=0}^{q} {{c-1}\choose {k}} \left(\frac{p}{m}\right)^{k}  \\
& >& \frac{p}{m}\frac{c-1-q}{q+1}g(q).
 \end{array}$$

  On the other hand, $g(q+1)= g(q) + {{c-1}\choose {q+1}} \left(\frac{p}{m}\right)^{q+1}$, and therefore  
$$g(q) + {{c-1}\choose {q+1}} \left(\frac{p}{m}\right)^{q+1}> \frac{\frac{p}{m}(c-1-q)}{q+1} g(q).$$ 

 For $\lfloor\frac{c-2}{4}\rfloor = q$,  since  ${{c-1}\choose { q+1}} = {{c-1}\choose { q} }\frac{c-1-q}{q+1}$, multiplying the inequality by $m(q+1)$, adding $-m(q+1)g(q)$ and dividing by $p(c-1-q)-m(q+1)$ (which by (\ref{pvsm}) is positive ), we obtain  $${{c-1}\choose {q}} \left(\frac{p}{m}\right)^{q} \frac{p(c-1-q)}{p(c-1-q) - m(q+1)}> g(q)$$ and from here the claim follows.

By  the Claim, it only remains to prove that

 \[
\left(\frac{p}{m}\right)^{\lfloor\frac{c-2}{4}\rfloor} \frac{p(c-1-\lfloor\frac{c-2}{4}\rfloor)}{p(c-1-\lfloor\frac{c-2}{4}\rfloor) - m(\lfloor\frac{c-2}{4}\rfloor+1)}  < 
\begin{cases}
\left(\frac{3c-2}{2c-4}\right) \left(\dfrac{p}{m}\right)^{ \lfloor\frac{c-2}{4}\rfloor} & \hbox{ if }  p\geq m; \\
\left(\frac{3c-2}{2c-4}\right) \left(\dfrac{m}{p}\right)^{ \lfloor\frac{c-2}{4}\rfloor} &  \hbox{ if } p<m.
\end{cases}
\] 

If $p\geq m$ then,

$$\begin{array}{lcl}
p\left(c-1-\left\lfloor\frac{c-2}{4}\right\rfloor\right) - m\left(\left\lfloor\frac{c-2}{4}\right\rfloor+1\right) &\geq& p\left(c-1-\left\lfloor\frac{c-2}{4}\right\rfloor\right) - p\left(\left\lfloor\frac{c-2}{4}\right\rfloor+1\right)\\
& =&  p\left(c-2-2\left\lfloor\frac{c-2}{4}\right\rfloor\right)
\end{array}$$
 thus, $$\frac{p(c-1-\lfloor\frac{c-2}{4}\rfloor)}{p(c-1-\lfloor\frac{c-2}{4}\rfloor) - m(\lfloor\frac{c-2}{4}\rfloor+1)} \leq \frac{c-1-\lfloor\frac{c-2}{4}\rfloor}{c-2-2\lfloor\frac{c-2}{4}\rfloor}\leq  \frac{3c-2}{2c-4}.$$

For the case when $m>p$, let us suppose on the contrary, that  
$$\left(\frac{p}{m}\right)^{\lfloor\frac{c-2}{4}\rfloor} \frac{p(c-1-\lfloor\frac{c-2}{4}\rfloor)}{p(c-1-\lfloor\frac{c-2}{4}\rfloor) - m(\lfloor\frac{c-2}{4}\rfloor+1)}  \geq \left(\frac{3c-2}{2c-4}\right) \left(\dfrac{m}{p}\right)^{ \lfloor\frac{c-2}{4}\rfloor}.$$
Multiplying by $ \left(\frac{\frac{1}{p}}{\frac{1}{p}}\right) \left(\dfrac{m}{p}\right)^{ \lfloor\frac{c-2}{4}\rfloor}$ both sides of the inequality  we obtain  that
\begin{equation}\label{eqmp}\frac{c-1-\lfloor\frac{c-2}{4}\rfloor}{(c-1-\lfloor\frac{c-2}{4}\rfloor) - \frac{m}{p}(\lfloor\frac{c-2}{4}\rfloor+1)} \geq  \left(\frac{3c-2}{2c-4}\right)\left(\dfrac{m}{p}\right)^{ 2\lfloor\frac{c-2}{4}\rfloor}.\end{equation}

On the one hand, since $\frac{p}{m}\geq \frac{ \lfloor\frac{c-2}{4}\rfloor+2}{c-1- \lfloor\frac{c-2}{4}\rfloor}$ it follows that $\frac{m}{p}\leq \frac{ c-1- \lfloor\frac{c-2}{4}\rfloor}{\lfloor\frac{c-2}{4}\rfloor+2}$ and therefore,

 $$\begin{array}{lcl}
 (c-1-\lfloor\frac{c-2}{4}\rfloor) - \frac{m}{p}(\lfloor\frac{c-2}{4}\rfloor+1) &\geq& (c-1-\lfloor\frac{c-2}{4}\rfloor) - \frac{ (c-1-\lfloor\frac{c-2}{4}\rfloor)(\lfloor\frac{c-2}{4}\rfloor+1)}{\lfloor\frac{c-2}{4}\rfloor+2} \\
 &=&  (c-1- \lfloor\frac{c-2}{4}\rfloor)(1-\frac{\lfloor\frac{c-2}{4}\rfloor+1}{\lfloor\frac{c-2}{4}\rfloor+2} )= \frac{c-1- \lfloor\frac{c-2}{4}\rfloor}{\lfloor\frac{c-2}{4}\rfloor+2}.
 \end{array}$$  
 
Then, 
  $$\frac{c-1-\lfloor\frac{c-2}{4}\rfloor}{(c-1-\lfloor\frac{c-2}{4}\rfloor) - \frac{m}{p}(\lfloor\frac{c-2}{4}\rfloor+1)} \leq \frac{c-1-\lfloor\frac{c-2}{4}\rfloor}{\frac{c-1- \lfloor\frac{c-2}{4}\rfloor}{\lfloor\frac{c-2}{4}\rfloor+2}}=\left\lfloor\frac{c-2}{4}\right\rfloor+2$$ and therefore, using inequality  {(\ref{eqmp})},
  $$\left\lfloor\frac{c-2}{4}\right\rfloor+2\geq  \left(\frac{3c-2}{2c-4}\right) \left(\dfrac{m}{p}\right)^{ 2\lfloor\frac{c-2}{4}\rfloor} \geq \frac{3}{2} \left(\dfrac{m}{p}\right)^{ 2\lfloor\frac{c-2}{4}\rfloor}.$$ 
  
   Since  $c\geq 10$,   it follows that $\frac{m}{p}\leq 1.28$.

On the other hand, since $\frac{m}{p}> 1$ it follows that $$\frac{c-1-\lfloor\frac{c-2}{4}\rfloor}{(c-1-\lfloor\frac{c-2}{4}\rfloor) - \frac{m}{p}(\lfloor\frac{c-2}{4}\rfloor+1)} \geq \left(\frac{3c-2}{2c-4}\right) \frac{m}{p}.$$  
Multiplying both sides of the inequality by $\frac{(c-1-\lfloor\frac{c-2}{4}\rfloor) - \frac{m}{p}(\lfloor\frac{c-2}{4}\rfloor+1)}{c-1-\lfloor\frac{c-2}{4}\rfloor} = 1 - \frac{m}{p}\left(\frac{\lfloor\frac{c-2}{4}\rfloor+1}{c-1-\lfloor\frac{c-2}{4}\rfloor}\right) $ we obtain that
$$1 \geq \left(\frac{3c-2}{2c-4}\right) \frac{m}{p} - \left(\frac{3c-2}{2c-4}\right) \left(\frac{m}{p}\right)^2  \left(\frac{\lfloor\frac{c-2}{4}\rfloor+1}{c-1-\lfloor\frac{c-2}{4}\rfloor}\right)$$  and therefore 
$$\left(\frac{3c-2}{2c-4}\right) \left(\frac{m}{p}\right)^2  \left(\frac{\lfloor\frac{c-2}{4}\rfloor+1}{c-1-\lfloor\frac{c-2}{4}\rfloor}\right) - \left(\frac{3c-2}{2c-4}\right) \frac{m}{p} +1 \geq 0. $$ Since 
$\frac{\lfloor\frac{c-2}{4}\rfloor+1}{c-1-\lfloor\frac{c-2}{4}\rfloor} \leq \frac{c+2}{3c-2}$ we see that 
$$\left(\frac{3c-2}{2c-4}\right) \left(\frac{m}{p}\right)^2  \left(\frac{c+2}{3c-2}\right) - \left(\frac{3c-2}{2c-4}\right) \frac{m}{p} +1 = \left(\frac{c+2}{2c-4}\right) \left(\frac{m}{p}\right)^2  - \left(\frac{3c-2}{2c-4}\right) \frac{m}{p} +1 \geq 0.$$

Thus, using the quadratic formula,  $\frac{m}{p} \geq  \frac{4c-8}{2(c+2)}$ or $\frac{m}{p}\leq \frac{2c+4}{2(c+2)}= 1$. Since $m> p$ it follows that $\frac{m}{p} \geq  \frac{4c-8}{2(c+2)}$  and since $c\geq 10$, $\frac{m}{p}\geq \frac{16}{12} = 1.33$ which is a contradiction because  $\frac{m}{p}\leq 1.28$.
From here, the result follows. \hfill${\ \rule{0.5em}{0.5em}}$

\section{Final remarks}

The problem of finding interesting sufficient conditions in order to guarantee a strong partition of multipartite tournaments {in general} seems to be much more complicated and probably would need different techniques. Let $\mathcal{A}(r,c)$ be the set of $r$-balanced $c$-partite tournaments with no strong partition and $\Omega(r,c)=\min \{\omega(G_{r,c}): G_{r,c}\in \mathcal{A}(r,c)\}$. Notice that better lower bounds of $\Omega(r,c)$ leads, by the Main Theorem, to better sufficient conditions for having strong partitions. For now, our corollaries assume that for every partition of a multipartite tournament  of $\mathcal{A}(r,c)$ there exists exactly one vertex with in-degree or out-degree at most $\left\lfloor\frac{c-2}{4}\right\rfloor$, which we think its not a realistic approximation, and thus one may find a better way to estimate  $\Omega(r,c)$.

\end{document}